\def\l{\lambda}
\def\g{\gamma}
\def\0{\bar{0}}
\def\1{\bar{1}}
\def\e{\epsilon}
\def\g{\mathfrak{g}}
\def\L{{\Lambda}}
\def\g{{\mathfrak g}}
\def\l{{\lambda}}
\newtheorem{lemma}{Lemma}[section]
\newtheorem{theorem}[lemma]{Theorem}
\newtheorem{proposition}[lemma]{Proposition}
\newtheorem{definition}[lemma]{Definition}
\newtheorem{corollary}[lemma]{Corollary}
\title{\bf  Reresentations of the restricted Cartan type Lie superalgebra $W(m,n,1)$}
\author{Bin Shu\footnote{Department of Mathematics,
East China Normal University, Shanghai, 200062, P. R. China} \ and Chaowen Zhang\footnote{Department of Mathematics, China university
 of Mining and Technology, Xuzhou, 221008 Jiang Su, P. R. China}}
\date{ }
\begin{document}
\maketitle

\begin{abstract}
Simple modules  for the restricted Witt superalgebra $W(m,n,1)$ are
considered. Conditions are provided for the restricted and
nonrestricted Kac modules to be simple.

\end{abstract}

{\it  Mathematics Subject Classification (2000)}: 17B50; 17B10.

\section{Introduction}
Let $\mathbf F$ be an algebraically closed field with
characteristics $p>2$. Let $\mathfrak
g=\mathfrak{g}_{\overline{0}}\oplus \mathfrak{g}_{\overline{1}}$ be
a Lie superalgebra over $\mathbf F$. $\mathfrak{g} $ is called
restricted if $\mathfrak{g}_{\bar{0}}$ is a restricted Lie algebra
and if $\mathfrak{g}_{\bar{1}}$ is a restricted
$\mathfrak{g}_{\bar{0}}$-module (see \cite{p}). The $p$-mapping
$[p]:\mathfrak{g}_{\bar{0}}\rightarrow \mathfrak{g}_{\overline{0}} $
is also called the $p$-mapping of  the Lie superalgebra
$\mathfrak{g}$.\par

Let $\mathfrak{g}$ be a restricted Lie superalgebra and $\mathfrak
M={\mathfrak M}_{\bar{0}}\oplus {\mathfrak M}_{\bar{1}}$ be a simple
$\mathfrak{g}$-module. By \cite{z},  there is a unique
$\chi\in\mathfrak{g}_{\bar{0}}^{\ast}$ such that
$(x^{p}-x^{[p]}-\chi^p(x) \cdot  1)    {\mathfrak M}=0$ for all
$x\in \mathfrak{g}_{\bar{0}}.$

A $\mathfrak{g}$-module $M$ is called having a $p$-character
$\chi\in \mathfrak{g}^*_{\overline{0}}$
 provided that
$$x^{p}\cdot m-x^{[p]}\cdot m=\chi (x) ^{p}\ m\quad \mbox{for  all}\,\, x\in
\mathfrak{g}_{\bar{0}},\, m\in M.
$$
Then each simple $\g$-module $M$ has a $p$-character $\chi$. $M$ is
called a restricted $\g$-module if $\chi=0$, and a nonrestricted
$\g$-module if $\chi\neq 0$.\par

 Cartan type Lie superalgebras $W(n), S(n), H(n), K(n)$ over the complex
 number field $\mathfrak C$ were
introduced by Kac \cite{k1}. Then Serganova has determined the
simplicity of the Kac modules for $W(n), S(n), H(n)$\cite{vs}, and
also computed their character formulas. Recently, the relative
support variety of $W(n)$ was defined and determined by
\cite{bkn}.\par In the modular case, there are more simple
restricted Lie superalgebras constructed by Liu and Zhang \cite{lz}.
According to \cite{lz}, there are four types of restricted Cartan
type Lie superalgebras, namely, Witt type, special type, Hamiltonian
type and contact type. But the structure of the last three types of
Lie superalgebras are far from explicit.  Representations of these
four types of Lie superalgebras have not been studied yet.\par The
present paper is aimed at determining the simple modules for the
restricted Witt type Lie superalgebra $W=W(m,n,1)$. The paper is
organized as
  follows. Section 2 defines  the restricted Witt Lie
  superalgebra  $W=W(m,n,1)$\cite{lz}. In Section 3 we determine the simple $W$-modules.
   Inspired by \cite{vs}, we define the
  generalized root reflections for the restricted Cartan type Lie
  superalgebras, and using which
  together with Jantzen's $u(\g)-T$ method \cite{J1} we show that the Kac module is simple if
  and only if the weight $\l$ is typical. In the case
  $n=0$, this recovers the result of Shen \cite{sh} for the restricted Lie
  algebras $W(m,1)$ in a different approach. In the case $m=0$, this is analogous to
  the result obtained in \cite{vs} for the Witt superalgebra W(n) over $\mathfrak C$.
  In Section 4, we study the case when the
  nonrestricted Kac module is simple. As a consequence, the conclusions
  about  the nonrestricted simple
  modules for the restricted Lie algebras $W(m,1)$ \cite{ho1,z1} are recovered.

\section{Preliminaries}
Let $V=V_{\overline{0}}\oplus V_{\overline{1}} $ be a
$\mathbf{Z}_{2}$-graded vector space.  For each homogeneous $x\in
V$, we use $\bar x$ to denote the grading of $x$. The algebra
$\mathrm{End}_{\mathbf{F}}(V)$ consisting of the $\mathbf{F}$-linear
transformations
 of $V$  becomes an associative superalgebra if one
defines
$$\mathrm{End}_{\mathbf{F}} (V)_{\theta }
:=\{A\in \mathrm{End}_{\mathbf{F}} (V)\mid A (V_{\mu})\subseteq
V_{\theta +\mu}, \mu\in \mathbf{Z}_{2}\}
$$
for  $\theta\in \mathbf{Z}_{2}.$ On the vector superspace
$\mathrm{End}_{\mathbf{F}} (V) =\mathrm{End}_{\mathbf{F}}
(V)_{\overline{0}}\oplus \mathrm{End}_{\mathbf{F}}
(V)_{\overline{1}}$ we define a new multiplication $[\ ,\ ]$ by
$$ [A,B]:=AB-(-1)^{\bar A\bar B}BA
\quad \mbox{for  }\ A, B\in \mathrm{End}_{\mathbf{F}} (V).$$ This
superalgebra endowed with the new multiplication is a Lie
superalgebra, denoted by
 $ {gl}(V)={gl}(V)_{\0}\oplus {gl}_{\overline{1}}(V)_{\1}.$

Let $\mathfrak{g}$ be a restricted Lie superalgebra. For each
$\chi\in \mathfrak{g}_{\0}^*$, define the $\chi$-reduced enveloping
algebra of $\mathfrak{g}$ by
$u(\mathfrak{g},\chi)=U({\mathfrak{g}})/I_{\chi}$, where $I_{\chi}$
is the $\mathbf Z_2$-graded two-sided ideal of $U({\mathfrak{g}})$
generated by elements $\{x^p-x^{[p]}-\chi(x)^p1|x\in
\mathfrak{g}_{\overline{0}}\}$. When $\chi=0,$
$u(\mathfrak{g})=:u(\mathfrak{g},0)$ is called the restricted
universal enveloping algebra of $\mathfrak{g}$. Similar to the Lie
algebra case, each $\g$-module with character $\chi$ is a
$u(\g,\chi)$-module and vise versa.\par

Let $(\mathfrak{g},[p]) $ be a restricted Lie superalgebra. Suppose
that $ e_{1},\ldots, e_{m} $ and $f_{1},\ldots,f_{n}$ are ordered
bases of $\mathfrak{g}_{\overline{0}}$ and
$\mathfrak{g}_{\overline{1}}$ respectively.  Then
$u(\mathfrak{g},\chi)$ has the following $\mathbf{F}$-basis
\cite{z}:
$$ \{ f_{1}^{b_1}\cdots f_n^{b_n}e_1^{a_1}\cdots e_m^{a_m} \mid   0\leq a_i\leq p-1;
 b_j=0 \ \mbox{or}\ 1 \}.$$

\begin{definition}\cite{sc} Let $A=A_{\0}+A_{\1}$ be a superalgebra.
Let $D\in gl(A)_{\0}\cup gl(A)_{\1}$ such that
$$D(xy)=(Dx)y+(-1)^{\bar D\bar x}x(Dy),\quad  x, y\in A_{\0}\cup A_{\1}.$$
Then $D$ is called a superderivation of $A$.
\end{definition}
 We denote by $\text{Der} (A)$ the
set of all the superderivations of $A$. Then
$\text{Der}(A)=\text{Der}(A)_{\0}\oplus \text{Der}(A)_{\1}$, where
$\text{Der}(A)_{\bar i}=\text{Der}(A)\cap gl(A)_{\bar i}$, $i=1,2$.
$\text{Der}(A)$ is a Lie superalgebra with the Lie product $$[D_1,
D_2]=D_1D_2-(-1)^{\bar D_1\bar D_2}D_2 D_1, \quad D_1,D_2\in
\text{Der}(A)_{\0}\cup \text{Der}(A)_{\1}.$$

We now define  the restricted Witt superalgebra $W(m,n,1)$
\cite{lz}. Let $m,n$ be two nonnegative integers, and let $a,b\in
\mathbf Z^m$. We write $a\leq b$ if $a_i\leq b_i$ for all $1\leq
i\leq m$ and we write $a<b$ if $a\leq b$ but $a\neq b$. If $a,b\geq
0$, define $\binom{a}{b}=\Pi^m_{i=1}\binom{a_i}{b_i}$, where
$\binom{a_i}{b_i}$ is the usual binomial coefficient with the
convention that $\binom{a_i}{b_i}=0$ unless $b_i\leq a_i$. Set
$A(m,1)=\{a\in \mathbf Z^m|0\leq a\leq \tau\}$, where
$\tau=:(p-1,\dots,p-1)$. The {\sl divided power algebra}
 $\mathfrak A(m,1)$ is an associative
$\mathbf F$-algebra having $\mathbf F$-basis $\{x^a|a\in A(m,1)\}$
and multiplication given by
$$ x^ax^b=\begin{cases} \binom{a+b}{a}x^{a+b}, &\text{if $a+b\leq
\tau$}\\0,&\text{otherwise}.\end{cases}$$  For $n\geq 0$, let $I$
denote the sequence $i_1,\dots, i_s$, where $1\leq i_1<i_2<\dots
<i_s\leq n$. Let $\mathfrak I$ be the set of all such sequences
including the empty one. For each $I\in \mathfrak I$, we let $|I|$
denote the length of  $I$.

Let $\Lambda(n)$ denote the exterior algebra with generators
$\xi_1,\xi_2,\dots,\xi_n$. For  $I=i_1,\dots, i_s$, we denote the
product $\xi_{i_1}\dots\xi_{i_s}\in\Lambda(n)$ by $\xi_I$. Then
$\Lambda(n)$ is $\mathbf Z_2$-graded with $$\Lambda(n)_{\0}=\langle
\xi_I||I| \quad\text{is even}\rangle\quad\Lambda(n)_{\1}=\langle
\xi_I||I| \quad\text{is odd}\rangle.$$ We denote
$\Lambda(m,n)=\mathfrak{A}(m,1)\otimes \Lambda(n)$. Then
$\Lambda(m,n)$ is a superalgebra with the $\mathbf{Z}_2$-gradation
given by \cite{lz}:$$ \Lambda(m,n)_{\0}=\mathfrak{A}(m,1)\otimes
\Lambda (n)_{\0},\quad
\Lambda(m,n)_{\1}=\mathfrak{A}(m,1)\otimes\Lambda(n)_{\1}.$$ We
denote the element $f\otimes g\in \Lambda(m,n)$ simply by $fg$,
$f\in\mathfrak A(m,1)$, $\g\in\Lambda(n)$.\par Define the linear map
$D_i$, $i=1,\dots,n$ and $d_j$, $j=1,\dots,m$ on $\Lambda(m,n)$ by:
$$D_i x^a\xi_I=\begin{cases} (-1)^{|i<I|}x^a\xi_{I\setminus
i},&\text{if $i\in I$}\\0,&\text{otherwise},\end{cases}\quad
d_jx^a\xi_I=x^{a-\e_j}\xi_I,$$ where $|i<I|$ is the number of
indices in $I$ that are smaller than $i$. Thus, we have $D_i\in
gl(\L(m,n))_{\1}$ and $d_j\in gl(\L(m,n))_{\0}$. It is easy to see
that
$$[D_i,D_j]=[d_i,d_j]=[D_i,d_j]=0.$$ Let $x,y\in \L(m,n)$ be
homogeneous. A short calculation shows that
$$D_i(xy)=D_i(x)y+(-1)^{\bar x}xD_i(y),\quad  d_j(xy)=d_j(x)y+xd_j(y),$$
$i=1,\dots, n$, $j=1,\dots, m$.  Therefore, $D_i\in
\text{Der}(\L(m,n))_{\1}$ and $d_j\in \text{Der}(\L(m,n))_{\0}$. Let
$$W=:W(m,n,1)=\sum^n_{i=1} \L(m,n) D_i+\sum^m_{j=1}\L(m,n) d_j\subseteq \text{Der}(\L(m,n))).$$
For each  $f\in \L(m,n)$,  clearly we have $\overline{fD_i}=\bar
f+\bar D_i$, $i=1,\dots,n$ and  $\overline{fd_i}=\bar f+\bar
d_i=\bar f$, $i=1,\dots,m$.  Let $fD,gE\in W$, where $f,g\in
\L(m,n)_{\0}\cup\L(m,n)_{\1}$ and $D,E\in\{D_1,\dots,D_n,
d_1,\dots,d_m\}$.  Then the definition of the Lie product in
$\text{Der}(\L(m,n))$ yields \cite{lz} $$
[fD,gE]=fD(g)E-(-1)^{\bar{fD}\bar{gE}}gE(f)D.$$ i.e., $W$ is Lie
sub-superalgebra of $\text{Der}(\L(m,n))$. For each $D\in
W_{\0}\subseteq \text{Der}(\L(m,n))_{\0}$, a short calculation shows
that $D^p\in W_{\0}$, so that $W$ is a restricted Lie superalgebra
with the p-map the pth power map. By \cite{lz},
$W=\text{Der}(\L(m,n))$ and $W$ is a simple  Lie superalgebra
referred to as the {\sl Witt type Lie superalgebra}.
\par
 Let $$W_i=\langle x^a\xi_ID_j||I|=i+1\rangle +\langle
x^a\xi_Id_j||I|=i\rangle.$$ Then we have $W=W_{-1}+W_0+\dots +
W_{n-1}$.  It is easy to see that $[W_i, W_j]\subseteq W_{i+j}$, so
that $W$ is $\mathbf Z$-graded. Thus, $W$ is $\mathbf Z_2$-graded
with
$$W_{\0}=\sum_{j=2k}W_j\quad   W_{\1}=\sum_{j=2k+1}W_j.$$  In case $m=0$,
 $W=W(n)$\cite{k1}. If $n=0$, then $W=W(m,1)$ is the restricted Lie
algebra of Witt type \cite{sh,st,sf}. \par By definition, $W$ has a
basis
$$\{x^a\xi_ID_j|0\leq a\leq \tau, I\in \mathfrak I, 1\leq j\leq n\}\cup\{x^a\xi_Id_j|0\leq
a\leq \tau, I\in \mathfrak I, 1\leq j\leq m\}.$$  For $x^a\xi_ID_i$,
$x^a\xi_Id_j\in W_{\0}$,  a straightforward calculation shows that
$$(x^a\xi_ID_i)^{[p]}=\begin{cases} \xi_iD_i,&\text{if $a=0$ and
$I=\{i\}$}\\0,&\text{otherwise}\end{cases} \quad
(x^a\xi_Id_j)^{[p]}=\begin{cases}(x^ad_j)^{[p]},&\text{if
$I=\phi$}\\0,&\text{otherwise.}\end{cases}$$
\par Next we introduce a new gradation for $W=W(m,n,1)$. For each $a\in A(m,1)$,
 let $|a|=\sum^m_{i=1}a_i$. We denote
$$W_{[k]}=\langle x^a\xi_ID_i,x^a\xi_Id_j|0\leq a\leq \tau, 1\leq i\leq n, 1\leq j\leq m, I\in\mathfrak I, |a|+|I|=k+1\rangle.$$
 Then we get $W=\sum^s_{i=-1}W_{[i]}$, where
$s=m(p-1)+n-1$. It is easy to see that $[W_{[i]}, W_{[j]}]\subseteq
W_{[i+j]}$. Note that each homogeneous component $W_{[i]}$
containing  elements in both $W_{\1}$ and $W_{\0}$. In particular,
$W_{[0]}$ itself is a Lie superalgebra.

\begin{lemma}$$W_{[0]}\cong gl(m|n) .$$
\end{lemma}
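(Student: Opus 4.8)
The plan is to realise $W_{[0]}$ through its adjoint action on the bottom piece $W_{[-1]}$ of the $\mathbf Z$-grading. Spelling out the defining condition $|a|+|I|=0$, the space $W_{[-1]}$ has basis $\{d_1,\dots,d_m,D_1,\dots,D_n\}$; since $\bar d_j=\0$ and $\bar D_i=\1$, its even subspace is $\langle d_1,\dots,d_m\rangle$ and its odd subspace is $\langle D_1,\dots,D_n\rangle$, so $W_{[-1]}$ is a superspace of superdimension $(m|n)$ and $gl(W_{[-1]})\cong gl(m|n)$. Using the grading relation $[W_{[i]},W_{[j]}]\subseteq W_{[i+j]}$ with $i=0$, $j=-1$ we get $[W_{[0]},W_{[-1]}]\subseteq W_{[-1]}$, so the adjoint action restricts to a map
$$\phi\colon W_{[0]}\longrightarrow gl(W_{[-1]})\cong gl(m|n),\qquad \phi(u)\colon v\mapsto [u,v].$$
Being the restriction of the adjoint representation, $\phi$ is a homomorphism of Lie superalgebras which sends homogeneous elements of degree $\theta\in\mathbf Z_2$ to operators of degree $\theta$; thus $\phi$ respects the $\mathbf Z_2$-gradation, and it remains only to show $\phi$ is bijective.

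For this I would compute $\phi$ on the natural basis of $W_{[0]}$. Spelling out $|a|+|I|=1$, that basis consists of the four families
$$\{\xi_lD_i\mid 1\le l,i\le n\},\quad \{x^{\e_s}d_j\mid 1\le s,j\le m\},\quad \{x^{\e_s}D_i\mid 1\le s\le m,\ 1\le i\le n\},\quad \{\xi_ld_j\mid 1\le l\le n,\ 1\le j\le m\},$$
the first two lying in $W_{\0}$ and the last two in $W_{\1}$. Applying the bracket formula $[fD,gE]=fD(g)E-(-1)^{\bar{fD}\,\bar{gE}}gE(f)D$ together with $D_i(\xi_l)=\d_{il}$ and $d_j(x^{\e_s})=\d_{js}$, a short calculation shows that each such basis element acts on $W_{[-1]}$ as $\pm$ a single matrix unit $E_{pq}$ (with $E_{pq}$ sending the basis vector $q$ to $p$ and killing the rest): $\xi_lD_i$ gives $-E_{D_i,D_l}$ in the odd--odd block, $x^{\e_s}d_j$ gives $-E_{d_j,d_s}$ in the even--even block, $x^{\e_s}D_i$ gives $-E_{D_i,d_s}$ and $\xi_ld_j$ gives $+E_{d_j,D_l}$, landing in the two \emph{different} odd off-diagonal blocks. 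As the indices vary these $\pm E_{pq}$ are distinct and span $gl(W_{[-1]})$, so $\phi$ is simultaneously injective and surjective.

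A dimension count confirms the conclusion: $\dim W_{[0]}=n^2+m^2+2mn=(m+n)^2=\dim gl(m|n)$, with even parts of dimension $m^2+n^2$ and odd parts of dimension $2mn$ matching on both sides. The argument carries no conceptual difficulty; the only step requiring care is the sign bookkeeping in the eight brackets $[\,\cdot\,,d_k]$ and $[\,\cdot\,,D_k]$ and the check that $x^{\e_s}D_i$ and $\xi_ld_j$ are carried to the two \emph{opposite} off-diagonal blocks of $gl(m|n)$ rather than to the same one.
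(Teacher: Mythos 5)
Your proof is correct, and it organizes the verification differently from the paper. The paper's own proof simply exhibits the decomposition $W_{[0]}=W^-+W_0+W^+$ with $W_0=\langle x_id_j\rangle+\langle \xi_iD_j\rangle$, $W^-=\langle x_iD_j\rangle$, $W^+=\langle \xi_id_j\rangle$, matches these by inspection with the block decomposition of $gl(m|n)$, and declares the isomorphism "easy to see" --- implicitly leaving to the reader the check that all brackets among these $(m+n)^2$ basis elements reproduce the matrix-unit relations. You instead realize the isomorphism as the restriction of the adjoint representation to the faithful action of $W_{[0]}$ on $W_{[-1]}\cong \mathbf F^{m|n}$; this makes the homomorphism and parity-preservation properties automatic and reduces the whole proof to the bijectivity of one linear map, which you verify by computing that the four families of basis vectors go to $\pm$ distinct matrix units filling the four blocks. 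Your bracket computations (e.g.\ $[\xi_lD_i,D_k]=-\d_{kl}D_i$, $[\xi_ld_j,D_k]=+\d_{kl}d_j$, and the vanishing of the cross terms) are all consistent with the paper's formula $[fD,gE]=fD(g)E-(-1)^{\overline{fD}\,\overline{gE}}gE(f)D$, and the dimension count $(m+n)^2=m^2+n^2+2mn$ confirms the conclusion. What your route buys is that one never has to verify the Lie superalgebra structure on the target side; what the paper's presentation buys is the explicit triangular decomposition $W_{[0]}=N^-_{[0]}+H+N^+_{[0]}$ that is reused immediately afterward in Section 3.
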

\begin{proof}By definition, $W_{[0]}=W^-+W_0+W^+$, where $$W^-=\langle
x_iD_j|1\leq i\leq m,1\leq j\leq n\rangle, W_0= \langle x_id_j|1\leq
i,j\leq m\rangle+\langle \xi_iD_j|1\leq i,j\leq n\rangle,$$
$$W^+=\langle \xi_id_j|1\leq i\leq n, 1\leq j\leq m\rangle.$$ Then it
is easy to see that $W_{[0]}\cong gl(m|n)$.
\end{proof}

\section{Restricted simple $\g$-modules}
Let  $W=W(m,n,1)$.  In this section we study the restricted
$W$-modules.

\begin{lemma} (\cite{z}) Let $I$ be a
 finite dimensional $\mathbf{Z}_{2}$-graded
 ideal of a Lie superalgebra $\mathfrak{g}$ and ${\mathfrak M}={\mathfrak M}_{\0}\oplus {\mathfrak M}_{\1}$ a
 simple module of $\mathfrak{g}.$ If $x$ acts nilpotently on ${\mathfrak M}$ for
 all $x\in I,$ then $ IM=0.$
 \end{lemma}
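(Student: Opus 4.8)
The plan is to run the classical Engel-theorem argument in the super setting: produce the subspace of $I$-invariants in $\mathfrak M$, show it is nonzero and $\g$-stable, and then invoke simplicity. Concretely, set
$$\mathfrak M^I=\{m\in\mathfrak M\mid ym=0\ \text{for all}\ y\in I\}.$$
Since $I=I_{\0}\oplus I_{\1}$ is $\mathbf Z_2$-graded, $\mathfrak M^I$ is again $\mathbf Z_2$-graded: if $ym=0$ for a homogeneous $y$, then $ym_{\0}$ and $ym_{\1}$ lie in opposite graded components and so vanish separately, whence $m_{\0},m_{\1}\in\mathfrak M^I$. Thus if I can show $\mathfrak M^I\neq 0$ and that $\mathfrak M^I$ is a $\g$-submodule, then simplicity of $\mathfrak M$ forces $\mathfrak M^I=\mathfrak M$, which is exactly $I\mathfrak M=0$.

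First I would check that $\mathfrak M^I$ is a submodule. Take $m\in\mathfrak M^I$ and homogeneous $x\in\g$, $y\in I$. The defining identity of a module over a Lie superalgebra gives
$$[x,y]\cdot m=x\cdot(y\cdot m)-(-1)^{\bar x\,\bar y}\,y\cdot(x\cdot m).$$
Here $y\cdot m=0$ because $m\in\mathfrak M^I$, and $[x,y]\in I$ because $I$ is an ideal, so $[x,y]\cdot m=0$ as well. Hence $y\cdot(x\cdot m)=0$ for every homogeneous $y\in I$, and therefore $x\cdot m\in\mathfrak M^I$. Since homogeneous elements span $\g$ and $I$, this shows $\g\,\mathfrak M^I\subseteq\mathfrak M^I$, so $\mathfrak M^I$ is a graded $\g$-submodule.

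The real content is the nonvanishing $\mathfrak M^I\neq 0$, and this is where I expect the main obstacle. The required statement is the Engel--Jacobson theorem for Lie superalgebras: a finite-dimensional $\mathbf Z_2$-graded Lie superalgebra $I$ acting on a nonzero space $\mathfrak M$ so that every element of $I$ is a nilpotent operator has a nonzero common null vector. I would obtain this by passing to the associative subalgebra $\mathcal A\subseteq\mathrm{End}_{\mathbf F}(\mathfrak M)$ generated by the image of $I$ and proving that $\mathcal A$ is a nilpotent algebra; choosing $N$ minimal with $\mathcal A^N=0$, any nonzero vector of $\mathcal A^{N-1}\mathfrak M$ is annihilated by $I$, so $\mathfrak M^I\neq 0$. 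The nilpotence of $\mathcal A$ is proved by induction on $\dim I$ exactly as in Jacobson's proof of Engel's theorem (so no finite-dimensionality of $\mathfrak M$ is needed), using that $\mathrm{ad}\,y$ is nilpotent on $I$ for each $y\in I$ — because left and right multiplication by a nilpotent operator are commuting nilpotents, hence so is their super-commutator — in order to split off a codimension-one graded subalgebra. The $\mathbf Z_2$-graded sign bookkeeping is the only point differing from the purely even case; note that for odd $x$ one has $[x,x]=2x^2$, which is even and nilpotent, and since $p>2$ the factor $2$ is invertible, so odd generators cause no difficulty.

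Once $\mathfrak M^I\neq 0$ is in hand, combining it with the submodule property and simplicity of $\mathfrak M$ finishes the argument: $\mathfrak M^I$ is a nonzero graded submodule of the simple module $\mathfrak M$, so $\mathfrak M^I=\mathfrak M$ and $I\mathfrak M=0$. I remark that an equivalent packaging observes directly that $I\mathfrak M$ is a $\g$-submodule (again because $[\g,I]\subseteq I$), so by simplicity $I\mathfrak M$ is $0$ or $\mathfrak M$; the nilpotence of $\mathcal A$ then rules out $I\mathfrak M=\mathfrak M$, since that equality would give $\mathcal A^N\mathfrak M=\mathfrak M\neq 0$ for all $N$, contradicting $\mathcal A^N=0$. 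Either route reduces the whole lemma to the super Engel--Jacobson input.
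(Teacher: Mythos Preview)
Your argument is correct and is the standard Engel--theorem route. The paper itself does not supply a proof of this lemma: it is simply quoted from the reference \cite{z}, so there is no in-paper argument to compare against. Your write-up would serve as a self-contained justification. One small point of hygiene: when you argue that $\mathrm{ad}\,y$ is nilpotent in order to run the Jacobson induction, this is literally true only after replacing $I$ by its image $\rho(I)\subseteq\mathrm{End}_{\mathbf F}(\mathfrak M)$, since the left/right-multiplication trick lives in $\mathrm{End}_{\mathbf F}(\mathfrak M)$; you do pass to $\mathcal A$ in the next line, so this is only a matter of phrasing.
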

 Denote $W^{[i]}=\sum _{j\geq i}W_{[j]}$. Let $M=M_{\0}\oplus M_{\1}$ be a simple
 $u(W^{[0]})$-module. Then by the lemma above, we have $W^{[1]}M=0$. Hence
 $M$ is a simple $u(W_{[0]})$-module. Let $K(M)=u(W)\otimes
 _{u(W^{[0]})}M$ be the induced module, referred to as the restricted Kac module. The main purpose in this
 section is to determine when $K(M)$ is simple.\par
  The cartan subalgebra of $W$ is that of
  $W_{[0]}$ having basis $$\{
  \xi_1D_1,\dots,\xi_nD_n,x_1d_1,\dots,x_md_m\}.$$   We denote the dual
  basis in $H^*$:
  $$\eta_i=(\xi_iD_i)^*,  \e_j=(x_jd_j)^*, 1\leq i\leq n, 1\leq j\leq m.$$   $W$
  has a root space decomposition $W=H+\oplus
  _{\alpha \in\Delta}W_{\alpha}$ with respect to
  $H$. We denote the set of roots for the homogeneous
  component $W_{[i]}$ by $\Delta_{[i]}$.  Hence $W_{[i]}=\oplus _{\alpha\in \Delta_{[i]}
  }W_{\alpha}.$  Note that $x^a\xi_Id_j$ is a root
  vector with root $$a_1\e_1+\dots +a_m\e_m+\eta_{i_1}+\dots
  +\eta_{i_s}-\e_j,$$ while $x^a\xi_ID_j$ is a root
  vector with root $$a_1\e_1+\dots +a_m\e_m+\eta_{i_1}+\dots
  +\eta_{i_s}-\eta_j.$$  We see that if
  $\alpha \in \Delta_{[0]}$, then $dim W_{\alpha}=dim W_{-\alpha}=1$. If
  $\alpha\in\Delta_{[-1]}$, then $dimW_{\alpha}=1$ and $dimW_{-\alpha}>1$.\par
\begin{definition}Let $\g=\g_{\0}\oplus\g_{\1}$ be a Lie superalgebra
and $V=V_{\0}\oplus V_{\1}$ be a $\mathfrak{g}$-module. Let
$B=\mathfrak{h}+\mathfrak{n}^+$ be a Borel subalgebra of $\g$, where
$\mathfrak{n}^+$ is the  nilradical  and
  $\mathfrak{h}$ is a maximal torus
 with a basis $h_1,\dots, h_n$. If there
is a nonzero vector $v\in V_{\0}\cup V_{\1}$ such that $$n
v=0,\quad\text{for all}\quad n\in\mathfrak{n}^+,  h_i v=\lambda _i
v, \lambda _i\in \mathbf{F}, i=1,\dots,n,$$ then $v$ is called a
maximal vector(with respect to $B$) of weight
$(\lambda_1,\dots,\lambda_n)$.
\end{definition}
Let $\g=\mathfrak n^-+\mathfrak h+\mathfrak  n^+$ be a restricted
Lie superalgebra. Take the Borel subalgebra $B=\mathfrak h+\mathfrak
n^+$. Assume $\mathfrak h$ has basis $h_i, i=1,\dots,s$ with
$h_i^{[p]}=h_i$. Then we identify each $\l\in \mathfrak h^*$ with
the $s$-tuple $(\l(h_1),\dots,\l(h_s))$.  Let
$\l=(\l_1,\dots,\l_s)\in \mathbf F^s_p$. Then $\l$ defines a
1-dimensional restricted
 $B$-module $\mathbf F v_{\l}$: $$h
 v_{\l}=\l(h)v_{\lambda}, nv_{\lambda}=0 \quad \text{for every}\quad h\in \mathfrak h, n\in
 \mathfrak n^+.$$
  Let
 $M^{B}(\l)=u(\g)\otimes _{u(B)}\mathbf F_{\lambda}$
 be the baby verma module. Then $M^{B}(\lambda)$ has a unique
 proper maximal $\mathbf Z_2$-graded submodule. The unique simple $\mathbf Z_2$-graded quotient is denoted by
 $L^{B}(\lambda)$.
 \begin{lemma} $L^B(\l)$ has a unique
maximal vector of weight $\l$.
\end{lemma}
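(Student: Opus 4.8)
The plan is to match the maximal vectors of weight $\lambda$ in $L^B(\lambda)$ with the $u(\mathfrak g)$-endomorphisms of $L^B(\lambda)$, and then to apply Schur's Lemma. Write $\bar v_\lambda$ for the image in $L^B(\lambda)$ of the canonical generator $1\otimes v_\lambda$ of $M^B(\lambda)$. It is nonzero (as $L^B(\lambda)\neq 0$); by construction $\mathfrak n^+\bar v_\lambda=0$ and $h\bar v_\lambda=\lambda(h)\bar v_\lambda$ for $h\in\mathfrak h$; and it generates $L^B(\lambda)$, since $M^B(\lambda)=u(\mathfrak g)(1\otimes v_\lambda)$ and $L^B(\lambda)$ is a quotient of $M^B(\lambda)$. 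So $\bar v_\lambda$ is a maximal vector of weight $\lambda$, and it remains to prove that any such maximal vector is a scalar multiple of it. Here I take maximal vectors of weight $\lambda$ to have the same parity as $\bar v_\lambda$, as is implicit in the statement; if $L^B(\lambda)$ admits an odd self-automorphism there is also one in the opposite parity, and then ``unique'' must be read parity-by-parity.

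So let $w\in L^B(\lambda)$ be a maximal vector of weight $\lambda$ of the parity of $\bar v_\lambda$. Because $w$ transforms under $B$ through the character $\lambda$ exactly as $v_\lambda$ does, and because $M^B(\lambda)$ and $L^B(\lambda)$ are both $u(\mathfrak g,\chi)$-modules for the relevant $\chi$, Frobenius reciprocity (the adjointness of induction and restriction) produces a $\mathfrak g$-module homomorphism $\phi:M^B(\lambda)\to L^B(\lambda)$ with $\phi(1\otimes v_\lambda)=w$; it is even, since $w$ and $1\otimes v_\lambda$ have the same parity. As $w\neq 0$ and $L^B(\lambda)$ is simple, $\phi$ is surjective, so $\ker\phi$ is a proper $\mathbf Z_2$-graded submodule of $M^B(\lambda)$. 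Since $M^B(\lambda)$ has a unique maximal proper $\mathbf Z_2$-graded submodule $N$ (and hence every proper graded submodule lies in $N$), we get $\ker\phi\subseteq N=\ker\pi$, where $\pi:M^B(\lambda)\twoheadrightarrow L^B(\lambda)=M^B(\lambda)/N$ is the canonical map. Comparing dimensions, $\dim\ker\phi=\dim M^B(\lambda)-\dim L^B(\lambda)=\dim\ker\pi$, so $\ker\phi=\ker\pi$.

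Hence $\phi$ factors as $\phi=\psi\circ\pi$ with $\psi$ an automorphism of the $\mathfrak g$-module $L^B(\lambda)$ and $\psi(\bar v_\lambda)=\phi(1\otimes v_\lambda)=w$. As $L^B(\lambda)$ is a simple $\mathbf Z_2$-graded module over the finite-dimensional algebra $u(\mathfrak g,\chi)$ and $\mathbf F$ is algebraically closed, Schur's Lemma gives $\mathrm{End}_{\mathfrak g}(L^B(\lambda))_{\bar 0}=\mathbf F\cdot\mathrm{id}$; since $\psi$ is even, $\psi=c\cdot\mathrm{id}$ for some $c\in\mathbf F^{\times}$, and therefore $w=c\,\bar v_\lambda$. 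The one genuinely non-formal step is the equality $\ker\phi=\ker\pi$, which rests entirely on the uniqueness of the maximal submodule of the baby Verma module established just before the lemma; the rest is formal. I would also remark that passing to the simple quotient is essential here: the $\lambda$-weight space of $M^B(\lambda)$ itself need not be one-dimensional (products of negative root vectors whose roots sum to $0$ modulo $p$ can give extra weight-$\lambda$ vectors, e.g.\ when $m\geq 3$), so a direct weight-space count on $M^B(\lambda)$ would not suffice.
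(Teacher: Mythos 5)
Your proof is correct, but it takes a genuinely different route from the paper's. The paper argues directly inside $L^B(\l)$: if $v$ is the image of the canonical generator and $v'$ is another maximal vector, then $v'=fv$ for some $f\in u(\mathfrak n^-)$ and $v=gfv$ for some $g\in u(\mathfrak n^-)$; since the augmentation ideal $u(\mathfrak n^-)\mathfrak n^-$ is nilpotent, $f$ must have a nonzero constant term $c$, and iterating the argument on $v'-cv=f'v$ forces $v'=cv$. Your argument instead runs through the universal property: Frobenius reciprocity lifts the maximal vector $w$ to a map $M^B(\l)\to L^B(\l)$, the uniqueness of the maximal proper graded submodule plus a dimension count identifies the two kernels, and the graded Schur's Lemma finishes. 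Both are sound; yours is more structural and generalizes cleanly, while the paper's is more elementary and, as a byproduct, actually proves something slightly stronger that the authors later rely on (e.g.\ in Proposition 3.9): uniqueness of the maximal vector among \emph{all} weights, not just weight $\l$, since any maximal vector $v'$ whatsoever can be written as $fv$. Your reciprocity argument is pinned to weight $\l$ and would not immediately rule out a maximal vector of a different weight without a further argument. Two small remarks: your parity hedge is unnecessary here --- every odd element of $u(\mathfrak n^-)$ lies in the augmentation ideal, so the paper's nilpotency argument shows there is no maximal vector of the opposite parity at all (in particular $L^B(\l)$ is not of type $Q$); and your closing observation that the $\l$-weight space of $M^B(\l)$ itself can exceed dimension one in characteristic $p$ is a worthwhile point that justifies working in the simple quotient.
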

\begin{proof}  Let $\g=\mathfrak n^-+\mathfrak h+\mathfrak  n^+$.
 Let $v\in L^B(\l)_{\0}\cup
L^B(\l)_{\1}$ be a maximal vector. Then the simplicity of $L^B(\l)$
shows that $L^B(\l)=u(\mathfrak n^-)v$. Suppose $v'$ is also a
maximal vector in $L^B(\l)$. Then there is $f\in u(\mathfrak n^-)$
such that $v'=fv$. Since $L^B(\l)$ is simple, there is $g\in
u(\mathfrak n^-)$ such that $v=gfv$. Suppose $f\in u(\mathfrak
n^-)\mathfrak n^-$. Then $gf$ is nilpotent, so we get $v=0$, a
contradiction. Then we must have $f=c+f'$, where $0\neq c\in \mathbf
F$ and $f'\in u(\mathfrak n^-)\mathfrak n^-$. Then we get
$v'-cv=f'v$, if nonzero, is maximal. The argument above applies, one
gets $v'=cv$.

\end{proof}

Let  $M=M_{\0}+M_{\1}$ be a restricted $W$-module. Let $v\in
M_{\0}\cap M_{\1}$ be a maximal vector with respect to some Borel
subalgebra $B=H+N$, where $H$ is the maximal torus given earlier.
Let $h_i=\begin{cases} \xi_iD_i, &\text{if $1\leq i\leq
n$}\\x_{i-n}d_{i-n},&\text{if $n<i\leq n+m$.}\end{cases}$ Assume
$h_i v=\l_i v$, $i=1,\dots,m+n$.  Then since $h_i^{[p]}=h_i$,  we
have $\l_i^p-\l_i=0$ and hence $\l_i\in \mathbf F_p$.
 We denote
$$\L=\mathbf F \eta_1\oplus\dots\oplus \mathbf F \eta_n\oplus
  \mathbf F\e_1\oplus\dots\oplus\mathbf F\e_m.$$ Define the subset of $\L$
  $$\L^a=:\{a\eta_i+\eta_{i+1}+\dots+\eta_n+(p-1)\e_1+\dots
  +(p-1)\e_n|1\leq i\leq n,a\in \mathbf
  F_p\}$$$$\cup\{(p-1)\e_{j+1}+\dots +(p-1)\e_m|0\leq j\leq m\},$$
  where for $j=m$, we let $(p-1)\e_{j+1}+\dots +(p-1)\e_m=0$.
  We call  each weight $\lambda \in \L^a$ atypical,
  otherwise typical. Note that if $n=0$,  the
  atypical weights are exactly the {\sl exceptional weights} for the restricted
  Witt type Lie algebra $W(m,1)$\cite{sh,ho}; while in the case $m=0$, the
  atypical weights are  analogous  to those in the complex field case \cite{vs}. \par
  Let $W=W(m,n,1)$.  $W_{[0]}$ has a
  triangular decomposition: $W_{[0]}=N^-_{[0]}+H+N^+_{[0]}$, where
  $$N^+_{[0]}=\langle \xi_id_j|1\leq i\leq n, 1\leq j\leq
  m\rangle+\langle \xi_iD_j|1\leq i<j\leq n\rangle+\langle
  x_id_j|1\leq i<j\leq m\rangle,$$ $$N^-_{[0]}=\langle x_iD_j|1\leq
  i\leq m,1\leq j\leq n\rangle+\langle \xi_jD_i|1\leq i<j\leq
  n\rangle+\langle x_jd_i|1\leq i<j\leq m\rangle.$$ Then $W$ has
  Borel subalgebras $$B_{max}=H+N^+_{[0]}+W^{[1]}\quad
  B_{min}=W_{[-1]}+N^-_{[0]}+H.$$ Note that $W_{[-1]}=\oplus^m_{i=1} W_{-\e_i}
  \oplus \oplus^n_{j=1}W_{-\eta_j}$, where $$W_{-\e_i}=\mathbf Fd_i,
 \quad
  W_{-\eta_j}=\mathbf FD_j.$$ We now define a sequence of  root reflections $r_{\e_m}$,$\dots,
  r_{\e_1}$, $r_{\eta_n},\dots, r_{\eta_1}$. \par
  Firstly we define   $$B_m=:r_{\e_m}(B_{max})=H+N^+_m,$$  where
   $N^+_m$ is obtained by removing root spaces $$\sum_{k\geq
  1}W_{k\e_m},\quad  [N_{[0]}^-+\sum^{n}_{j=1}\mathbf FD_j+\sum^{m-1}_{i=1}\mathbf Fd_i,
   \sum_{k\geq 1}W_{k\e_m}]$$ from $N^+_{[0]}+W^{[1]}$
  and adding $W_{-\e_m}=\mathbf
  Fd_m$. Let $$N^-_m=N^-_{[0]}+\sum_{j=1}^n\mathbf FD_j+\sum^{m-1}_{i=1}\mathbf Fd_i
  +[N_{[0]}^-+\sum^{n}_{j=1}\mathbf FD_j
  +\sum^{m-1}_{i=1}\mathbf Fd_i,\sum_{k\geq 1}W_{k\e_m} ]+\sum_{k\geq 1}W_{k\e_m}.$$ Then it is easy to see
  that
  $W=N_m^-\oplus H\oplus N^+_m$ and both  $N^+_m$ and
  $N^-_m$ are  nilpotent. Therefore  $W=N_m^-+H+N^+_m$ is a new triangular
  decomposition, and hence $B_m=H+N^+_m$ is a Borel
  subalgebra.\par
  Suppose we have defined $B_{k+1}=r_{\e_{k+1}}\dots r_{\e_m}(B_{max})$, $1\leq
  k<m$. Then we define $r_{\e_k}$: $$r_{\e_k}(B_{k+1})=:B_k=H+N^+_k,$$
  where $N^+_k$ is obtained by adding $\mathbf Fd_k$ to $N^+_{k+1}$
  and removing the root spaces $$\sum_{l\geq 1}W_{l\e_k},
   [N^-_{[0]}+\sum_{j=1}^n\mathbf FD_j+\sum^{k-1}_{i=1}\mathbf Fd_i, \sum_{l\geq 1}W_{l\e_k}].$$
     Let
$$N^-_k=N^-_{[0]}+\sum_{j=1}^n\mathbf FD_j+\sum^{k-1}_{i=1}\mathbf Fd_i
  $$$$+ [N^-_{[0]}+\sum_{j=1}^n\mathbf FD_j+\sum^{k-1}_{i=1}\mathbf Fd_i,
  \sum_{l\geq 1,i\geq k}W_{l\e_i}]+\sum_{l\geq 1,i\geq k}W_{l\e_i}.$$
     Then it is easy to check that
  $W=N^-_k\oplus H\oplus N^+_k$ is a triangular decomposition  and hence $B_k$
  is a Borel subalgebra.    \par
 Recall \cite{k1,vs} $$W(n)=W(n)_{-1}+W(n)_0+\dots+W(n)_{n-1},$$ where $$W(n)_i=\langle
 \xi_ID_j|I\in\mathfrak{I}, j=1,\dots,n, |I|=i+1\rangle.$$ Let $n_0^+=\langle \xi_iD_j|1\leq i<j\leq
 n\rangle$ and $n_0^-=\langle \xi_jD_i|1\leq i<j\leq
 n\rangle$. By definition, we have
 $B_1=r_{\e_1}\dots r_{\e_m}(B_{max})=H+N^+_1$.\par  The reflection
 $r_{\eta_n}$ is defined by removing the root space $W_{\eta_n}+[n^-_0, W_{\eta_n}]$ from $N_1^+$ and
 adding $W_{-\eta_n}=\mathbf F D_n$. We denote the resulting Borel
 subalgebra  $r_{\eta_n}(B_1)$ by $B'_n$. For  each $1\leq r<n$, $r_{\eta_i}$ is
 defined by removing $W_{\eta_i}+[n^-_0,W_{\eta_i}]$ from $N_{i+1}^+$ and
 adding $W_{-\eta_i}=\mathbf F D_i$. We denote $B'_s=r_{\eta_s}\dots r_{\eta_n}(B_1)$ for
 $1\leq s\leq n$. Consequently,  we have $$B'_1=r_{\eta_1}\dots
 r_{\eta_n}r_{\e_1}\dots r_{\e_m}(B_{max})=B_{min}=H+N^+_{min},$$ where $$N^+_{min}=\langle
 D_1,\dots,D_n,d_1,\dots,d_m\rangle.$$

By the definition of $B_i$ and $B'_j$, $i=1,\dots,m$, $j=1,\dots,n$,
one can easily get
 \begin{lemma}For $0\leq s<m$(resp. $0\leq s<n$), let $v\in L^{B_{s+1}}(\l)$(resp. $L^{B'_{s+1}}(\l)$)  be the unique maximal
 vector of weight $\l$. If $d^r_sv\neq 0$(resp. $D_sv\neq 0$) but $d^{r+1}_sv=0$ for some
  $0\leq r\leq p-1$, then $d^r_sv$(resp. $D_sv$) is a maximal vector  with respect to
 the Borel subalgebra
 $B_s=r_{\e_s}(B_{s+1})$(resp. $B'_s=r_{\eta_s}(B'_{s+1})$).
 \end{lemma}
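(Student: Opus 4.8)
The plan is to work out, for the reflection $r_{\e_s}$, precisely what $N^+_s$ and $N^-_s$ are relative to $B_{s+1}=H+N^+_{s+1}$, and then verify the two defining conditions of a maximal vector for $w:=d^r_sv$ with respect to $B_s$: that $H$ acts by a (shifted) weight and that $N^+_s$ annihilates $w$. The case of $r_{\eta_s}$ and $D_sv$ is entirely parallel, so I would treat $r_{\e_s}$ in detail and remark that the other case follows by the same argument with $\e_s$ replaced by $\eta_s$ and $d_s$ by $D_s$.

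First I would note that, by construction, $N^+_s$ is obtained from $N^+_{s+1}$ by \emph{removing} the root spaces $\sum_{l\geq 1}W_{l\e_s}$ together with $[N^-_{[0]}+\sum_j\mathbf FD_j+\sum_{i<s}\mathbf Fd_i,\ \sum_{l\geq 1}W_{l\e_s}]$, and \emph{adding} $W_{-\e_s}=\mathbf Fd_s$. So it suffices to check three things: (i) $d_s$ kills $w$ — this is exactly the hypothesis $d^{r+1}_sv=0$; (ii) the part of $N^+_{s+1}$ that is kept still annihilates $w$; (iii) $H$ acts on $w$ by the weight $\l-r\e_s$, which is immediate since $d_s$ is a root vector of root $-\e_s$ and $w=d^r_sv$ with $v$ of weight $\l$. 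For (ii) the key tool is the commutator formula $[fD,gE]=fD(g)E-(-1)^{\ov{fD}\,\ov{gE}}gE(f)D$ from Section 2: for $x\in N^+_{s+1}$ one writes $xw=xd^r_sv=\sum_{j}\binom{r}{j}(\ad d_s)^j(x)\,d^{r-j}_sv$ (the super sign is harmless since all the relevant $\ad d_s$-iterates stay in fixed $\Z_2$-degrees, and $d_s\in W_{\0}$). Each term $(\ad d_s)^j(x)$ lies in $W$; I must argue it again lies in $N^+_{s+1}$, or else is one of the root vectors that got removed but which still kills $v$ because $v$ is $B_{s+1}$-maximal, or else lies in $H$ and then acts by a scalar on a vector that is itself killed by everything in $N^+_{s+1}$ one step down. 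Concretely: if $x$ is a root vector of root $\beta$, then $(\ad d_s)^j(x)$ has root $\beta-j\e_s$; since $\beta$ is a root of $N^+_{s+1}$ and the $-\e_s$-string through it stays within $N^+_{s+1}\cup H\cup(\text{the removed spaces})$, each term is handled.

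The main obstacle I anticipate is bookkeeping in step (ii): one must be sure that sliding $d_s$ past an element of the \emph{kept} part of $N^+_{s+1}$ never produces an element of the \emph{removed} part $[N^-_{[0]}+\sum_j\mathbf FD_j+\sum_{i<s}\mathbf Fd_i,\ \sum_{l\geq1}W_{l\e_s}]$ that does \emph{not} already annihilate $v$. The point is that $v$ is maximal for $B_{s+1}$, hence killed by \emph{all} of $N^+_{s+1}\supseteq$ everything except $W_{-\e_s}$ and the removed spaces; and the removed spaces are, by their very definition, brackets against $\sum_{l\geq1}W_{l\e_s}\subseteq N^+_{s+1}$, so any element of a removed space applied to the maximal vector $v$ is obtained by first applying something in $N^+_{s+1}$, giving zero. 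Thus I would organize the verification as: for $x$ in the kept part, expand $xw$ by the super-binomial commutator expansion, classify each $(\ad d_s)^j(x)$ by its root as lying in (a) the kept part of $N^+_{s+1}$, (b) $H$, (c) a removed space, or (d) a negative root space of the form $W_{-l\e_s}$ with $l\geq1$; in cases (a) and (c) the term vanishes because $d^{r-j}_sv$ is still killed (for (c), push the removed generator through to a genuine $N^+_{s+1}$-element), in case (b) it is a scalar times $d^{r-j}_sv$ times possibly more lowering, which must then be absorbed, and case (d) only arises when $x\in\sum_{l\geq1}W_{l\e_s}$, already excluded from the kept part. Finally, having checked $N^+_s w=0$ and the weight, I conclude $w=d^r_sv$ is a maximal vector for $B_s$, and the $r_{\eta_s}$ statement follows verbatim with $D_s$ ($D_s^2=0$, so $r=0$ or $1$) in place of $d_s$.
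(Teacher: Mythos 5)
Your overall plan -- check that $w=d_s^rv$ has weight $\l-r\e_s$, is killed by the added line $\mathbf Fd_s$ (that is the hypothesis $d_s^{r+1}v=0$), and is killed by the part of $N^+_{s+1}$ that survives into $N^+_s$ -- is exactly the verification the paper has in mind (the paper offers no proof at all, saying the lemma follows "by the definition of $B_i$ and $B'_j$"). But your step (ii) has genuine gaps. First, the expansion $xd_s^rv=\sum_j\binom{r}{j}(\mathrm{ad}\,d_s)^j(x)\,d_s^{r-j}v$ is not an identity (test $r=1$); the usable form is $xd_s^r=\sum_j(-1)^j\binom{r}{j}d_s^{r-j}(\mathrm{ad}\,d_s)^j(x)$, and the order matters: your vanishing argument needs the bracketed element to sit next to $v$, where $B_{s+1}$-maximality applies, not to act on $d_s^{r-j}v$. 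Second, your classification (a)--(d) is incomplete, and its completeness is precisely the content of the lemma: a priori $(\mathrm{ad}\,d_s)^j(x)$ can land in negative root spaces of $B_{s+1}$ other than $W_{-l\e_s}$, where nothing kills $v$ -- for instance $(\mathrm{ad}\,d_s)(x^{\e_s+\e_u}D_t)=x_uD_t\in N^-_{[0]}$, and $(\mathrm{ad}\,d_s)^j(x^{j\e_s}D_t)$ is a multiple of $D_t\notin N^+_{s+1}$. These do not occur for \emph{kept} $x$ only because such $x$ lie in the removed bracket space $[N^-_{[0]}+\sum_j\mathbf FD_j+\sum_{i<s}\mathbf Fd_i,\ \sum_{l\geq1}W_{l\e_s}]$; you assert the needed containment of the $-\e_s$-string without proof. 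Third, case (b) is not "absorbed": if an iterate landed in $H$ you would pick up a nonzero multiple of $d_s^{r-j}v$ and the argument would fail; it must be excluded, and it is, by the same root count as your case (d) (it would force the root of $x$ to be $j\e_s$, contradicting kept-ness). Fourth, your justification of case (c) ("push the removed generator through") is invalid: writing a removed element as $[y,z]$ with $z\in\sum_{l\geq1}W_{l\e_s}$, one has $[y,z]v=y(zv)\pm z(yv)$ and the second term need not vanish. The correct (and simpler) reason is that the removed spaces are, by construction, subspaces of $N^+_{s+1}$, hence annihilate the $B_{s+1}$-maximal vector $v$ directly.

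A clean way to close the argument, using what the paper has already asserted (that $N^+_s$ is a nilpotent subalgebra and $W=N^-_s\oplus H\oplus N^+_s$): for kept $x$ of root $\beta$, every iterate $(\mathrm{ad}\,d_s)^j(x)$ lies in $N^+_s$ and is a root vector of root $\beta-j\e_s$; since $\beta$ is not a positive multiple of $\e_s$ (those root spaces are removed), no iterate has root $-\e_s$ or $0$, so each iterate lies in the kept part $N^+_s\cap N^+_{s+1}$ and kills $v$. Combined with the corrected commutation formula this yields $N^+_sw=0$, and the weight computation plus the parallel $\eta_s$-case (where $D_s^2=0$, so only $r=0,1$ occur, with the harmless super sign) then go through as you indicate.
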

We denote $B_{m+1}=:B_{max}$ and $B'_{n+1}=:B_1$.
\begin{corollary}With the same condition as in the lemma above, we have $$L^{B_{i+1}}(\l)\cong L^{B_i}(\l-r\e_i),
 i=1,\dots m,\quad L^{B'_{s+1}}(\l)\cong L^{B'_s}(\l-\eta_s),
 s=1,\dots,n.$$
\end{corollary}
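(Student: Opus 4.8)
The plan is to deduce the corollary directly from Lemma 3.9 together with the universal property of the baby Verma modules and Lemma 3.6 (uniqueness of the maximal vector). Fix $i$ with $1\le i\le m$ (the argument for $B'_s$ is identical, with $d_i$ replaced by $D_s$, $r$ replaced by $1$, and $\e_i$ by $\eta_s$, since $D_s$ is odd and $D_s^2=0$). Let $v\in L^{B_{i+1}}(\l)$ be the unique maximal vector of weight $\l$, and let $r$ be determined by $d_i^r v\ne 0$, $d_i^{r+1}v=0$. By Lemma 3.9, $w:=d_i^r v$ is a maximal vector for the Borel subalgebra $B_i=r_{\e_i}(B_{i+1})$; its weight is $\l-r\e_i$, because $d_i\in W_{-\e_i}$ lowers the $H$-weight by $\e_i$ (one checks $[h,d_i]=-\e_i(h)d_i$ for $h\in H$ using the bracket formula $[fD,gE]=fD(g)E-(-1)^{\overline{fD}\,\overline{gE}}gE(f)D$). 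Thus $w$ generates a $B_i$-submodule isomorphic to the one-dimensional module $\mathbf F_{\l-r\e_i}$, and by the universal property of the baby Verma module there is a nonzero homomorphism $M^{B_i}(\l-r\e_i)\to L^{B_{i+1}}(\l)$ sending $1\otimes v_{\l-r\e_i}\mapsto w$. Since $L^{B_{i+1}}(\l)=u(N_{i+1}^-)v$ is simple and $w\ne0$, this map is surjective, hence $L^{B_{i+1}}(\l)$ is a nonzero quotient of $M^{B_i}(\l-r\e_i)$; being simple, it must be the unique simple quotient $L^{B_i}(\l-r\e_i)$. This gives $L^{B_{i+1}}(\l)\cong L^{B_i}(\l-r\e_i)$.

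To make the last step rigorous one needs to know that $L^{B_i}(\l-r\e_i)$ really is \emph{the} unique simple quotient of $M^{B_i}(\l-r\e_i)$ in the $\mathbf Z_2$-graded category; this is exactly the content of the construction of $L^{B}(\lambda)$ recorded just before Lemma 3.6, so no new work is required beyond observing that $w$ is homogeneous (it is, since $v$ is homogeneous and $d_i$ is even, so $w\in L^{B_{i+1}}(\l)_{\bar 0}$ or $L^{B_{i+1}}(\l)_{\bar 1}$ according to the parity of $v$). One subtlety worth spelling out: the hypothesis ``$d_i^r v\ne0$ but $d_i^{r+1}v=0$'' should be checked to hold for \emph{some} $r$ in the stated range, i.e. that $v$ is not killed outright in an unexpected way; but $d_i^p=d_i^{[p]}$ acts as a scalar on the restricted module (in fact $d_i^{[p]}$ acts trivially here on a highest-weight vector for the relevant torus, or one simply invokes that $d_i$ acts nilpotently modulo this scalar), so some such $r\le p-1$ exists, and Lemma 3.9 then applies verbatim.

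The main obstacle is not conceptual but bookkeeping: one must verify that $w=d_i^r v$ is indeed annihilated by \emph{all} of $N_i^+$, not merely by the part of $N_{i+1}^+$ that survives the reflection. This is where Lemma 3.9 does the real work — $N_i^+$ is obtained from $N_{i+1}^+$ by deleting $\sum_{l\ge1}W_{l\e_i}$ together with the bracket $[\,N^-_{[0]}+\sum_j\mathbf FD_j+\sum_{t<i}\mathbf Fd_t,\ \sum_{l\ge1}W_{l\e_i}\,]$ and adjoining $W_{-\e_i}=\mathbf Fd_i$ — so one has to see that $w$ is killed by the newly-adjoined generator's ``companions'' and by the old survivors simultaneously. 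Granting Lemma 3.9 (which we are entitled to do), this verification is immediate, and the corollary follows. The $B'_s$ case is then the same argument with the even nilpotent operator $d_i$ replaced by the odd operator $D_s$, noting $D_s^2=0$ forces $r\in\{0,1\}$ and the nonvanishing hypothesis $D_s v\ne0$ forces $r=1$, yielding the weight shift by $\eta_s$.
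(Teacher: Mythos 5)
Your argument is correct and is essentially the proof the paper intends (the paper states the corollary as an immediate consequence of the preceding lemma): the vector $d_i^r v$ (resp. $D_s v$) is a maximal vector for $B_i$ (resp. $B'_s$) of weight $\l-r\e_i$ (resp. $\l-\eta_s$), so by the universal property of the baby Verma module the simple module $L^{B_{i+1}}(\l)$ is a nonzero, hence surjective, image of $M^{B_i}(\l-r\e_i)$ and therefore coincides with its unique simple quotient $L^{B_i}(\l-r\e_i)$. Your extra remarks (existence of $r\le p-1$ via nilpotency of $d_i$, homogeneity of the new maximal vector, $D_s^2=0$ in the odd case) are consistent with, and slightly more explicit than, what the paper leaves implicit.
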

Note that
$$W_{\e_i}=\langle x^{\e_i+\e_s}d_s| 1\leq s\leq m\rangle +\langle
\xi_tx_iD_t| 1\leq t\leq n\rangle, W_{-\e_i}=\mathbf Fd_i$$ and
$$W_{\eta_i}=\langle \xi_ix^{\e_s}d_s| 1\leq s\leq m\rangle +\langle
\xi_t\xi_iD_t| t\neq i\rangle, W_{-\eta_i}=\mathbf FD_i.$$ Let
$\mathfrak{h}_{\e_i}=[W_{\e_i}, W_{-\e_i}]$ and
$\mathfrak{h}_{\eta_i}=[W_{\eta_i}, W_{-\eta_i}]$. Clearly we have
$$\mathfrak h_{\e_i}=H\quad \text{and} \quad\mathfrak h_{\eta_i}=\langle
x_id_i|1\leq i\leq m\rangle+\langle \xi_jD_j|j\neq i\rangle.$$
\begin{proposition}(1) For $1\leq i\leq m$, if $\l\neq 0, (p-1)\e_i$,
then  we have $$L^{B_{i+1}}(\l)\cong L^{B_i}(\l-(p-1)\e_i).$$ (2)
For $1\leq i\leq n$, if $\l(\mathfrak{h}_{\eta_i})\neq 0$,  then
$L^{B'_{i+1}}(\l)\cong L^{B'_i}(\l-\eta_i)$.
\end{proposition}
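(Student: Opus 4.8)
The plan is to reduce both parts, via the Corollary above, to a (non)vanishing statement about the maximal vector, and then to settle it. Let $v$ be the unique maximal vector of weight $\l$ of $L^{B_{i+1}}(\l)$ in part (1), resp.\ of $L^{B'_{i+1}}(\l)$ in part (2). By the Lemma above: in part (1), if $r$ is the largest integer with $d_i^{\,r}v\neq0$, then $L^{B_{i+1}}(\l)\cong L^{B_i}(\l-r\e_i)$; in part (2), if $D_iv\neq0$ (note $D_i^2=0$, so the only alternative is $D_iv=0$), then $L^{B'_{i+1}}(\l)\cong L^{B'_i}(\l-\eta_i)$. Hence it suffices to show: (1) $\l\neq 0,\,(p-1)\e_i$ forces $r=p-1$; (2) $\l(\mathfrak h_{\eta_i})\neq0$ forces $D_iv\neq0$.

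Part (2) is short. From the explicit description of the reflections one checks that $W_{\eta_i}$ still lies in the nilradical of $B'_{i+1}$, so $W_{\eta_i}v=0$. If moreover $D_iv=0$, then since $W_{\eta_i}\subseteq W_{\1}$ and $D_i\in W_{\1}$, for every $e\in W_{\eta_i}$ we get $[e,D_i]v=eD_iv+D_iev=0$; thus $\mathfrak h_{\eta_i}=[W_{\eta_i},W_{-\eta_i}]$ annihilates $v$, and since $\mathfrak h_{\eta_i}\subseteq H$ this means $\l(\mathfrak h_{\eta_i})=0$, contrary to hypothesis.

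For part (1), assume $r<p-1$; I would combine two facts. First, $\mathfrak w_i:=\langle d_i,\,x_id_i,\,x^{2\e_i}d_i,\dots,x^{(p-1)\e_i}d_i\rangle$ is a restricted subalgebra isomorphic to the Witt algebra $W(1,1)$; its part of positive degree lies in $\sum_{k\geq1}W_{k\e_i}$, hence in the nilradical of $B_{i+1}$, and $x_id_iv=\l(x_id_i)v$, so $u(\mathfrak w_i)v$ is a cyclic highest-weight $W(1,1)$-module, a quotient of the $p$-dimensional baby Verma $Z_{W(1,1)}(\l(x_id_i))$. Since $u(\mathfrak w_i)v$, spanned by $v,d_iv,\dots,d_i^{\,r}v$, has dimension $r+1<p$, this quotient is proper, so $Z_{W(1,1)}(\l(x_id_i))$ is reducible; by the known structure of the restricted $W(1,1)$-modules (the $m=1,\ n=0$ case of the theorem; cf.\ \cite{sh,sf}) this forces $\l(x_id_i)\in\{0,\,p-1\}$. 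Second, take any $e\in W_{\e_i}$ with $\e_i([e,d_i])=0$ — concretely $e=x^{\e_i+\e_j}d_j$ with $j\neq i$ (then $[e,d_i]=-x_jd_j$), or $e=\xi_tx_iD_t$ (then $[e,d_i]=-\xi_tD_t$). Then $ev=0$ (as $W_{\e_i}$ lies in the nilradical of $B_{i+1}$) and $[[e,d_i],d_i]=0$, so in $U(W)$ one has $e\,d_i^{\,k}=d_i\,e\,d_i^{\,k-1}+d_i^{\,k-1}[e,d_i]$; applying this to $v$ and inducting on $k$ gives $e\cdot d_i^{\,k}v=k\,\l([e,d_i])\,d_i^{\,k-1}v$. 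Taking $k=r+1$ we get $0=(r+1)\,\l([e,d_i])\,d_i^{\,r}v$, and since $d_i^{\,r}v\neq0$ and $r+1\not\equiv0\pmod p$ we conclude $\l([e,d_i])=0$. As these $[e,d_i]$ span $H\cap\ker\e_i$, this forces $\l$ to be a scalar multiple of $\e_i$, namely $\l=\l(x_id_i)\e_i$. Combining the two facts, $r<p-1$ would give $\l\in\{0,\,(p-1)\e_i\}$, a contradiction. Hence $r=p-1$, and the Corollary yields $L^{B_{i+1}}(\l)\cong L^{B_i}(\l-(p-1)\e_i)$.

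The routine-but-fiddly parts are the reflection bookkeeping — confirming that $W_{\eta_i}$, $W_{\e_i}$ and $\sum_{k\geq1}W_{k\e_i}$ genuinely survive inside the nilradicals of $B'_{i+1}$ and $B_{i+1}$ after the earlier reflections — which one reads off from the definitions of $r_{\e_j},r_{\eta_j}$. The essential input, and the main obstacle, is the Witt-algebra fact used in part (1): the relations coming from $W_{\e_i}$ by themselves only locate the exceptional weights among the multiples of $\e_i$, and it is precisely the reducibility pattern of $Z_{W(1,1)}$ that cuts this down to $\{0,(p-1)\e_i\}$.
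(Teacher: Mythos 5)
Your proof is correct. Part (2) is the paper's own argument, just run in contrapositive form, and your reduction of part (1) via Lemma 3.4/Corollary 3.5 to showing $d_i^{p-1}v\neq 0$, together with the identity $e\,d_i^{k}v=k\,\l([e,d_i])\,d_i^{k-1}v$ for $e\in W_{\e_i}$ with $[[e,d_i],d_i]=0$, is exactly how the paper disposes of the case $\l\notin\mathbf F\e_i$ (the paper phrases it as: if some $\l(x_jd_j)\neq0$ or $\l(\xi_sD_s)\neq0$, then repeated application of $x^{\e_i+\e_j}d_j$ forces $d_i^{p-1}v\neq0$; you take the contrapositive and conclude $\l$ is a multiple of $\e_i$). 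The one place you genuinely diverge is the remaining case $\l=a\e_i$: the paper stays self-contained, computing $(x^{(r+1)\e_i}d_i)d_i^{r}v=(-1)^r\l(x_id_i)v$ to exclude $d_i^{r}v=0$ for $r<p-1$ unless $a=0$, and $(x^{2\e_i}d_i)d_i^{p-1}v=[\l(x_id_i)+\frac12(p-1)(p-2)]d_i^{p-2}v$ to exclude $d_i^{p-1}v=0$ unless $a=p-1$; you instead note that $v$ generates a quotient of the restricted baby Verma module for the copy $\mathfrak w_i\cong W(1,1)$ and quote the classical fact (Chang; cf.\ \cite{sh,sf}) that this $p$-dimensional module is irreducible unless $a\in\{0,p-1\}$. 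Both are valid: the paper's two computations are in effect an inline rederivation of that $W(1,1)$ fact, so your version is shorter and more conceptual, at the price of importing an external result (not circular here, since the $W(1,1)$ classification long predates Theorem 3.13 and Corollary 3.14). The reflection bookkeeping you defer --- that $W_{\e_i}$, $\sum_{k\geq1}W_{k\e_i}$ and $W_{\eta_i}$ survive in the nilradicals of $B_{i+1}$, resp.\ $B'_{i+1}$ --- is indeed routine from the definitions of $r_{\e_j}$, $r_{\eta_j}$, and the paper also takes it for granted.
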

\begin{proof} (1) Let $v\in L^{B_{i+1}}(\l)$ be a maximal vector(with respect to $B_{i+1}$) of
weight $\l$. By the corollary above, it suffices to show that
$d_i^{p-1}v\neq 0$. We proceed with induction on $i$.\par   Assume
there is $j\neq i$ such that $\l(x_jd_j)\neq 0$. If $d^{p-1}_iv=0$,
then by applying $x^{\e_j+\e_i}d_j$, we get $$ 0=(x^{\e_j+\e_i}d_j)
d^{p-1}_iv=[x^{\e_j+\e_i}d_j,
d^{p-1}_i]v=(p-1)\l(x_jd_j)d^{p-2}_iv,$$  so that $d^{p-2}_iv=0$. By
repeated applications of $x^{\e_j+\e_i}d_j$ we get  $v=0$, a
contradiction. Similarly one gets $d^{p-1}_iv\neq 0$ if
$\l(\xi_sD_s)\neq 0$ for some $1\leq s\leq n$.\par If
$\l(\xi_sD_s)=\l(x_jd_j)=0$ for all $j\neq i$ and all $s\leq n$, we
must have $\l=a\e_i$, where $a=\l_{n+i}\in \mathbf F_p$ and $a\neq
0, p-1$. \par Suppose that $d^r_iv=0$ for some $r<p-1$. Using the
fact that $$(x^{(r+1)\e_i}d_i) d^r_iv=(-1)^r\l(x_id_i)v,$$ we get
$a=0$, so that $\l=0$,  a contradiction. It follows that
$d^{p-2}_iv\neq 0$.\par Suppose that $d^{p-1}_iv=0$. Then we get
from
$$ 0=(x^{2\e_i}d_i)
d^{p-1}_iv=[\l(x_id_i)+\frac{1}{2}(p-1)(p-2)]d^{p-2}_iv$$ that
$\l_{i+n}=\l(x_id_i)=a=p-1$, so that $\l=(p-1)\e_i$, a
contradiction.
\par (2) Let $v\in L^{B'_{i+1}}(\eta)$ be a maximal vector(with respect to $B'_{i+1}$) of
weight $\eta$. If $\l(\mathfrak{h}_i)\neq 0$, then there is $x\in
\g_{\eta_i}$ such that $\l([D_i,x])\neq 0$. From
$xD_iv=[x,D_i]v=\l([D_i,x])v\neq 0$, we get $D_iv\neq 0$, which is a
maximal vector with respect to the Borel subalgebra $B'_i$. So we
have

 $$L^{B'_{i+1}}(\l)\cong L^{B'_i}(\l-\eta_i), 1\leq i\leq n.$$
  This completes the proof.
\end{proof}
\begin{corollary} If
$\l\notin \L^a$, then $$L^{B_{max}}(\l)\cong
L^{B_{min}}(\l-\sum^m_{i=1}(p-1)\e_i-\sum^n_{j=1}\eta_j).$$
\end{corollary}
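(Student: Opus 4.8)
The plan is to follow the full chain of root reflections $r_{\e_m},\dots,r_{\e_1},r_{\eta_n},\dots,r_{\eta_1}$ that carries $B_{max}=B_{m+1}$ to $B_{min}=B'_1$, applying the preceding Proposition at each step. Composing the isomorphisms supplied by parts (1) and (2) of that Proposition gives
$$L^{B_{max}}(\l)\cong L^{B_1}\Bigl(\l-\sum_{i=1}^m(p-1)\e_i\Bigr)\cong L^{B_{min}}\Bigl(\l-\sum_{i=1}^m(p-1)\e_i-\sum_{j=1}^n\eta_j\Bigr),$$
so everything comes down to verifying that, at each of the $m+n$ steps, the hypothesis of the relevant part of the Proposition is satisfied. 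I would argue by contrapositive: a failure of the hypothesis at any single step forces $\l\in\L^a$.

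For the $\e$-steps: after $r_{\e_m},\dots,r_{\e_{i+1}}$ have been applied the running highest weight is $\mu_i=\l-\sum_{k=i+1}^m(p-1)\e_k$, and part (1) applies to $L^{B_{i+1}}(\mu_i)$ precisely when $\mu_i\neq0$ and $\mu_i\neq(p-1)\e_i$. Since $\mu_i$ and $\l$ have the same $\eta_1,\dots,\eta_n$ coordinates, such a failure can occur only when $\l$ already lies in $\mathbf F\e_1\oplus\cdots\oplus\mathbf F\e_m$; and in that case $\mu_i=0$ means $\l=(p-1)(\e_{i+1}+\cdots+\e_m)$ while $\mu_i=(p-1)\e_i$ means $\l=(p-1)(\e_i+\cdots+\e_m)$, which are respectively the $j=i$ and $j=i-1$ members of the second family in the definition of $\L^a$. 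Hence $\l\notin\L^a$ pushes the entire $\e$-chain through, and we land at the weight $\l-\sum_{i=1}^m(p-1)\e_i$ for the Borel subalgebra $B_1=B'_{n+1}$.

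For the $\eta$-steps: put $\mu=\l-\sum_{i=1}^m(p-1)\e_i$; after $r_{\eta_n},\dots,r_{\eta_{i+1}}$ have been applied the running weight is $\nu_i=\mu-(\eta_{i+1}+\cdots+\eta_n)$, and part (2) requires $\nu_i(\mathfrak{h}_{\eta_i})\neq0$. Writing $\l=\sum_j b_j\eta_j+\sum_j c_j\e_j$ with $b_j=\l(\xi_jD_j)$ and $c_j=\l(x_jd_j)$, and using the explicit description $\mathfrak{h}_{\eta_i}=\langle x_1d_1,\dots,x_md_m\rangle+\langle\xi_jD_j\mid 1\leq j\leq n,\ j\neq i\rangle$, one computes $\nu_i(x_jd_j)=c_j-(p-1)$, $\nu_i(\xi_jD_j)=b_j$ for $j<i$, and $\nu_i(\xi_jD_j)=b_j-1$ for $i<j\leq n$. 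Thus $\nu_i(\mathfrak{h}_{\eta_i})=0$ exactly when $c_j=p-1$ for all $j$, $b_j=0$ for $j<i$, and $b_j=1$ for $i<j\leq n$; equivalently $\l=b_i\eta_i+\eta_{i+1}+\cdots+\eta_n+(p-1)(\e_1+\cdots+\e_m)$ with $b_i\in\mathbf F_p$ arbitrary, which is a weight of the first family in $\L^a$. Again $\l\notin\L^a$ forces the whole $\eta$-chain through and delivers the claimed isomorphism.

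I expect the only genuine work to be the bookkeeping above: keeping track of how each already-applied reflection shifts the running weight, and then matching the failure conditions coordinate by coordinate against the two explicit families of $\L^a$. The $\eta$-part is the more delicate half, since there the hypothesis is a non-vanishing condition on the whole subalgebra $\mathfrak{h}_{\eta_i}$ rather than a plain inequality of weights; the key point is that $\mathfrak{h}_{\eta_i}$ is exactly the hyperplane of $H$ annihilating $\eta_i$, which is precisely why the first family of atypical weights carries a free parameter in its $\eta_i$-slot.
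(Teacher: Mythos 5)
Your proposal is correct and follows essentially the same route as the paper: iterate Proposition 3.6 along the reflection chain $B_{max}=B_{m+1}\to\cdots\to B_1=B'_{n+1}\to\cdots\to B'_1=B_{min}$, checking at each step that a failure of the hypothesis would place $\l$ in $\L^a$. The paper writes out only the first two $\e$-steps and appeals to induction, whereas you carry out the weight bookkeeping explicitly for both halves (in particular identifying $\mathfrak{h}_{\eta_i}=\ker\eta_i$ for the $\eta$-chain), which is a welcome elaboration but not a different argument.
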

\begin{proof}Since $\l\neq 0, (p-1)\e_m$, we have $L^{B_{max}}(\l)\cong
L^{B_m}(\l-(p-1)\e_m)$ by Proposition 3.6. Since $\l\neq
(p-1)\e_{m-1}+(p-1)\e_m\in \L^a$, $\l-(p-1)\e_m\neq 0,
(p-1)\e_{m-1}$. Then Proposition 3.6 applied again, we get
$$L^{B_{max}}(\l)\cong L^{B_m}(\l-(p-1)\e_m)\cong
L^{B_{m-1}}(\l-(p-1)\e_{m-1}-(p-1)\e_m).$$ Then the corollary
follows from induction.
\end{proof}
\begin{proposition} Let $\l\in \L^a$.\par (1) If $\l=(p-1)\e_s+\dots +(p-1)\e_m$
for some $1\leq s\leq m$,
 then  $$L^{B_{max}}(\l)\cong L^{B_{s+1}}((p-1)\e_s)\cong
L^{B_s}(r\e_s)$$ for some $0<r<p-1$.\par (2) If $\l=a\eta_t+\dots
+\eta_n+\sum^m_{i=1}(p-1)\e_i$ for some   $1\leq t\leq n$, $a\in
\mathbf F_p\setminus\{0\}$, then
$$L^{B_{max}}(\l)\cong L^{B'_{t+1}}(a\eta_t)\cong L^{B'_t}(a\eta_t).$$
\end{proposition}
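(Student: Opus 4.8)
The plan is first to produce $L^{B_{max}}(\lambda)\cong L^{B_{s+1}}((p-1)\epsilon_s)$ by applying Proposition 3.6(1) successively through $r_{\epsilon_m},r_{\epsilon_{m-1}},\dots,r_{\epsilon_{s+1}}$: at the stage of $r_{\epsilon_k}$ (for $s+1\le k\le m$) the weight in play is $(p-1)(\epsilon_s+\epsilon_{s+1}+\dots+\epsilon_k)$, which is neither $0$ nor $(p-1)\epsilon_k$ because its $\epsilon_s$-component does not vanish, so Proposition 3.6(1) applies each time (for $s=m$ there is nothing to do). For the remaining reflection $r_{\epsilon_s}$, let $v$ be the $B_{s+1}$-maximal vector of weight $(p-1)\epsilon_s$ in $L^{B_{s+1}}((p-1)\epsilon_s)$; by Corollary 3.5 it suffices to show that the integer $r$ with $d_s^rv\ne 0$ and $d_s^{r+1}v=0$ satisfies $1\le r\le p-2$, for then $L^{B_{s+1}}((p-1)\epsilon_s)\cong L^{B_s}((p-1-r)\epsilon_s)$ with $0<p-1-r<p-1$. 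The inequality $r\ge 1$, i.e. $d_sv\ne 0$, follows from the bracket identity $[x^{2\epsilon_s}d_s,d_s]=-x_sd_s$: if $d_sv=0$ then $0=(x^{2\epsilon_s}d_s)(d_sv)=-(x_sd_s)v=-(p-1)v$, forcing $v=0$ since $p>2$. The inequality $r\le p-2$, i.e. $d_s^{p-1}v=0$, is where I would use simplicity: the module is restricted and $d_s^{[p]}=0$, so $d_s^p=0$ on it; were $d_s^{p-1}v\ne0$, Lemma 3.4 would make $d_s^{p-1}v$ a $B_s$-maximal vector of weight $(p-1)\epsilon_s-(p-1)\epsilon_s=0$, so $u(W)\,d_s^{p-1}v$, a nonzero submodule of the simple module $L^{B_{s+1}}((p-1)\epsilon_s)$, would be the whole module, while it is also a quotient of $M^{B_s}(0)$, whose unique simple quotient is the one-dimensional trivial module $L^{B_s}(0)$ — impossible, as $L^{B_{s+1}}((p-1)\epsilon_s)$ contains the linearly independent weight vectors $v$ and $d_sv$.

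\textbf{Part (2).} Here I would first reach $L^{B_1}(a\eta_t+\eta_{t+1}+\dots+\eta_n)$ by Proposition 3.6(1) through $r_{\epsilon_m},\dots,r_{\epsilon_1}$ (the summand $a\eta_t$ with $a\ne 0$ survives every stage, so the weight is never $0$ or $(p-1)\epsilon_i$), and then apply Proposition 3.6(2) through $r_{\eta_n},\dots,r_{\eta_{t+1}}$: at the stage of $r_{\eta_i}$ (for $t+1\le i\le n$) the current weight $a\eta_t+\eta_{t+1}+\dots+\eta_i$ takes the value $a\ne 0$ on $\xi_tD_t\in\mathfrak h_{\eta_i}$ (note $t\ne i$), so Proposition 3.6(2) applies and yields $L^{B_{max}}(\lambda)\cong L^{B'_{t+1}}(a\eta_t)$. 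For the last reflection $r_{\eta_t}$, let $v$ be the $B'_{t+1}$-maximal vector of weight $a\eta_t$; the statement reduces to $D_tv=0$. Granting this, since $D_t$ is odd we have $D_t^2=0$, and, writing $N^+_{t+1}$ and $N^+_t$ for the nilradicals of $B'_{t+1}$ and $B'_t=r_{\eta_t}(B'_{t+1})$, the inclusion $N^+_t\subseteq N^+_{t+1}+\mathbf FD_t$ shows $v$ is annihilated by $N^+_t$, hence is $B'_t$-maximal of weight $a\eta_t$; by simplicity $L^{B'_{t+1}}(a\eta_t)\cong L^{B'_t}(a\eta_t)$. To prove $D_tv=0$, observe $W_{\eta_t}\cdot D_tv=[W_{\eta_t},D_t]\cdot v\subseteq\mathfrak h_{\eta_t}\cdot v=0$ (since $a\eta_t$ vanishes on $\mathfrak h_{\eta_t}$), so if $D_tv\ne0$ then Lemma 3.4 makes it $B'_t$-maximal of weight $(a-1)\eta_t$, whence $L^{B'_{t+1}}(a\eta_t)\cong L^{B'_t}((a-1)\eta_t)$. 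When $a=1$ this is absurd, because $L^{B'_t}(0)$ is one-dimensional while $L^{B'_{t+1}}(\eta_t)$ contains the distinct-weight vectors $v$ and $D_tv$; so $D_tv=0$ in that case. For general $a$ I would rule out $D_tv\ne0$ by a closer examination of $L^{B'_{t+1}}(a\eta_t)$: its $(a-1)\eta_t$-weight space is at most one-dimensional, $D_tv$ (if nonzero) must then be $B'_{t+1}$-maximal, and the remaining brackets — e.g. $(\xi_tx_iD_j)\cdot D_tv=-\,x_iD_j\cdot v$ for $j>t$ — together with $D_t^2v=0$ and simplicity are used to force a contradiction.

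\textbf{Where the difficulty lies.} The iterated application of Proposition 3.6 in the first isomorphism of each part is routine. The real content is the terminal reflection. In (1) the single subtle point — excluding $r=p-1$ — is dispatched cleanly by the trivial-module dimension argument above. In (2) the analogous step is genuinely harder, because $r_{\eta_t}$ is an odd (isotropic) reflection and $a\eta_t$ is degenerate with respect to it, so there is no local obstruction to $D_tv\ne0$: one has to uncover enough of the internal weight structure of $L^{B'_{t+1}}(a\eta_t)$ to force $D_tv=0$ for every $a\ne0$, and I expect that — rather than the bookkeeping — to be the main obstacle, with the case $a=1$ serving as the prototype.
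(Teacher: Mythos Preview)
Your Part (1) is correct and is essentially the paper's argument: the same iteration of Proposition 3.6(1) down to $L^{B_{s+1}}((p-1)\epsilon_s)$, and the same trivial-module contradiction to exclude $r=p-1$ (the paper phrases it as ``$d_s^{p-1}v$ would be a second $B_{s+1}$-maximal vector'', you phrase it as a dimension count, but these are the same).

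In Part (2) there is a genuine gap. You correctly reduce to $L^{B'_{t+1}}(a\eta_t)$ and correctly verify $W_{\eta_t}\cdot D_tv=0$, and your $a=1$ case is fine. But for general $a\ne 0$ you explicitly leave $D_tv=0$ unproved, and your sketched route (bounding the $(a-1)\eta_t$-weight space and then chasing brackets such as $(\xi_tx_iD_j)\cdot D_tv$) is neither complete nor clearly workable --- in particular the one-dimensionality of that weight space is not established, and even granting it does not by itself force $B'_{t+1}$-maximality.

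The paper closes this uniformly, with no case distinction on $a$, by a parabolic argument. The goal is to show that $D_tv$, if nonzero, is maximal already with respect to $B'_{t+1}$ (not merely $B'_t$), contradicting Lemma 3.3. Beyond $W_{\eta_t}$, the only root vectors of $N^+_{t+1}$ not lying in $N^+_t$ have roots of the form $\eta_t+\eta_j-\eta_i$ with $i<j<t$; for such $x$ one has $x\cdot D_tv=[x,D_t]\cdot v=\pm\,\xi_jD_i\cdot v$. The key point you are missing is that $\xi_jD_i\cdot v=0$ for all $i<j<t$: since the weight $a\eta_t$ vanishes on every $\xi_kD_k$ with $k\ne t$, it extends to a one-dimensional module over the parabolic $\mathfrak p=B'_{t+1}+\sum_{i<j<t}\mathbf F\,\xi_jD_i$, and $L^{B'_{t+1}}(a\eta_t)$ is the simple quotient of the module induced from $\mathfrak p$, on which each $\xi_jD_i$ acts by zero. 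That single observation replaces your projected weight-space analysis and handles every $a\in\mathbf F_p\setminus\{0\}$ at once.
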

\begin{proof}(1) If $\l=0$, then $L^{B_{max}}(\l)$ is the 1-dimensional
trivial module. So we have $L^{B_{max}}(\l)\cong L^{B_m}(\l)$,  the
proposition holds.\par Let $\l=(p-1)\e_m$ and $v$ be the unique
maximal vector of $L^{B_{max}}(\l)$.  We claim that $d^{p-1}_mv=0$.
If this is not the case, then $v'=:d^{p-1}_mv\neq 0$, so that $v'$
is the unique maximal vector of $L^{B_{max}}(\l)\cong
L^{B_m}(\l-(p-1)\e_m)$ with respect to the Borel subalgebra $B_m$.
Since $v'$ has the weight $\l-(p-1)\e_m=0$, $L^{B_m}(\l-(p-1)\e_m)$
is the 1-dimensional trivial $W$-module. Therefore $v'$ is also a
maximal vector with respect to the Borel subalgebra $B_{max}$. But
$L^{B_{max}}(\l)$ contains a unique maximal vector $v$, a
contradiction. Assume $d^r_m v\neq 0$ but $d^{r+1}_mv=0$ for some
$r<p-1$. Since $\l\neq 0$, we have $r>0$. Then by Corollary 3.5,
$L^{B_{max}}(\l)\cong L^{B_m}(\l-r\e_m)=L^{B_m}((p-1-r)\e_m)$.\par
If $\l=(p-1)\e_i+\dots +(p-1)\e_m$, from the proof of Proposition
3.6, one gets
$$L^{B_{max}}(\l)\cong
L^{B_{i+1}}(\l-(p-1)\e_{i+1}-\dots-(p-1)\e_m)=L^{B_{i+1}}((p-1)\e_i).$$
Let $v\in L^{B_{max}}(\l)$ be the unique maximal vector with respect
to the Borel subalgebra $B_{i+1}$.  Then a similar argument as above
applied, one gets $d^{p-1}_i v=0$. Thus,
$$L^{B_{i+1}}(\l_{i+1})\cong L^{B_i}(\l_{i+1}-r\e_i)$$ for some
$0<r<p-1.$\par (2) Let $\l=a\eta_t+\dots
+\eta_n+\sum^m_{i=1}(p-1)\e_i$ for some   $1\leq t\leq n$, $a\in
\mathbf F_p\setminus\{0\}$.  Using 3.5 and 3.6, we have
$L^{B_{max}}(\l)\cong L^{B'_{t+1}}(a\eta_t)$. Let $v\in
L^{B_{max}}(\l)$ be the maximal vector with respect to the Borel
subalgebra $B'_{t+1}$ of weight $a\eta_t$.  Then $D_tv$, if nonzero,
is the maximal vector with respect to the Borel subalgebra $B'_t$.
Since $a\eta_t(\mathfrak h_{\eta_t})=0$, we get for any $x\in
W_{\eta_t}$ that $$xD_tv=a\eta_t([x,D_t])v=0.$$ \par For any $x\in
W_{\alpha}\cap B'_{t+1}$, $\alpha\neq \eta_t$,  by definition of
$B'_t$, we have $\alpha=\eta_t+\eta_j-\eta_i$ for some $i<j<t$. Then
we get
$$xD_tv=[x,D_t]v=\pm \xi_jD_i v=0.$$ The last equality is given by the
fact that $L^{B'_{t+1}}(a\eta_t)$ is also the  simple quotient of
the the induced module $u(W)\otimes _{u(\mathfrak p)}\mathbf Fv$,
where $\mathfrak p$ is the parabolic superalgebra
$B'_{t+1}+\sum_{i<j<t}\mathbf F\xi_jD_i$. \par  Then we have that
$D_tv$ is a maximal vector with respect to $B'_{t+1}$. Hence
$L^{B'_{t+1}}(a\eta_t)$ contains two maximal vectors $v$ and $D_tv$,
a contradiction. So we have $D_tv=0$. i.e., $v$ is also the maximal
vector of $L^{B'_t}(a\eta_t)$. This completes the proof.
\end{proof}

 Each simple $W$-module $L^{B_{max}}(\l)$ contains a
unique minimal vector $v'$( maximal with respect to $B_{min}$) of
weight $\l'$. Then we get \begin{corollary}$\l$ is typical if and
only if $\l'=\l-\sum^n_{i=1}\eta_i-\sum^m_{j=1}(p-1)\e_j$.
\end{corollary}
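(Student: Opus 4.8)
The plan is to obtain both implications from the chain of generalised root reflections $B_{max}=B_{m+1}\rightarrow B_m\rightarrow\cdots\rightarrow B_1\rightarrow B'_n\rightarrow\cdots\rightarrow B'_1=B_{min}$ constructed above, together with Corollary 3.7 and Proposition 3.8. The forward implication is essentially a restatement of Corollary 3.7: if $\l$ is typical, that is $\l\notin\L^a$, then $L^{B_{max}}(\l)\cong L^{B_{min}}\bigl(\l-\sum_{i=1}^m(p-1)\e_i-\sum_{j=1}^n\eta_j\bigr)$ as $W$-modules, so the unique $B_{min}$-maximal vector of $L^{B_{max}}(\l)$ -- which is by definition the minimal vector $v'$ of weight $\l'$ -- has $\l'=\l-\sum_{i=1}^n\eta_i-\sum_{j=1}^m(p-1)\e_j$.

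For the converse I would argue contrapositively, showing that $\l\in\L^a$ forces $\l'\neq\l-\sum_i\eta_i-\sum_j(p-1)\e_j$. The underlying mechanism, packaged in Lemma 3.4 and Corollary 3.5, is that running the reflection chain realises $v'$ by successively applying suitable powers $d_s^{r_s}$ (root $-\e_s$) and then the operators $D_t$ (root $-\eta_t$) to the original maximal vector; hence $\l'=\l-\sum_{s=1}^m r_s\e_s-\sum_{t=1}^n b_t\eta_t$ with $0\leq r_s\leq p-1$, $b_t\in\{0,1\}$, and $\l'=\l-\sum(p-1)\e_i-\sum\eta_j$ holds exactly when every $r_s=p-1$ and every $b_t=1$. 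Because $d_s$ has root $-\e_s$ and $D_t$ has root $-\eta_t$, the $\e_s$-coefficient of the running weight changes only at the step $r_{\e_s}$ and the $\eta_t$-coefficient only at the step $r_{\eta_t}$; tracking one such coefficient will suffice to separate $\l'$ from $\l-\sum(p-1)\e_i-\sum\eta_j$.

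I would then run through the two families describing $\L^a$, first normalising each atypical weight into one of the forms handled by Proposition 3.8: a member of the first family with leading coefficient $0$ coincides with the member of that family of larger index and leading coefficient $1$, or (in the extreme case) with $(p-1)\e_1+\cdots+(p-1)\e_m$, so only the forms $\l=(p-1)\e_s+\cdots+(p-1)\e_m$ with $1\leq s\leq m$ and $\l=a\eta_t+\cdots+\eta_n+\sum_{i=1}^m(p-1)\e_i$ with $1\leq t\leq n$, $a\neq 0$ need attention, the degenerate $\l=0$ being dealt with directly since then $L^{B_{max}}(0)$ is trivial and $\l'=0\neq-\sum\eta_i-\sum(p-1)\e_j$ provided $m+n\geq 1$. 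In the first form, Proposition 3.8(1) gives $L^{B_{max}}(\l)\cong L^{B_{s+1}}((p-1)\e_s)\cong L^{B_s}(r\e_s)$ with $0<r<p-1$; the $\e_s$-coefficient of the running weight is $p-1$ at $B_{s+1}$, becomes $r$ at $B_s$, and is unchanged afterwards, so $\l'$ has $\e_s$-coefficient $r\neq 0$ whereas $\l-\sum(p-1)\e_i-\sum\eta_j$ has $\e_s$-coefficient $0$. In the second form, Proposition 3.8(2) gives $L^{B_{max}}(\l)\cong L^{B'_{t+1}}(a\eta_t)\cong L^{B'_t}(a\eta_t)$; the $\eta_t$-coefficient is $a$ at $B'_{t+1}$, still $a$ at $B'_t$, and unchanged afterwards, so $\l'$ has $\eta_t$-coefficient $a$ whereas $\l-\sum(p-1)\e_i-\sum\eta_j$ has $\eta_t$-coefficient $a-1\neq a$. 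In every case $\l'\neq\l-\sum(p-1)\e_i-\sum\eta_j$, which finishes the converse.

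The only real difficulty I anticipate is bookkeeping: verifying that every weight of $\L^a$, including the overlaps between the two families and the borderline parameter values, is captured by one of the two normal forms of Proposition 3.8, and checking carefully that the one coordinate of the running weight that I follow is genuinely frozen by all the remaining reflections of the chain. Once that is in order, the converse reduces to the observation that at least one coordinate of $\l'$ fails to drop by the maximal amount $p-1$ (respectively $1$).
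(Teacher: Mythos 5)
Your proof is correct and follows exactly the route the paper intends: the paper states this corollary without proof as an immediate consequence of Corollary 3.7 (for the typical direction) and Proposition 3.8 together with the coordinate-freezing property of the reflection chain (for the atypical direction), which is precisely what you have written out, including the reduction of the $a=0$ and $\l=0$ cases of $\L^a$ to the two normal forms.
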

Recall $W^{[i]}=\sum_{j\geq i}W_{[j]}$. By Lemma 3.1, each simple
$u(W^{[0]})$-module $M=M_{\0}+M_{\1}$  is a simple
$u(W_{[0]})$-module. By Lemma 3.3, $M$ is generated by the unique
maximal vector $v\in M_{\0}$ of weight $\l$. We denote $M=M(\l)$ and
let $K(\l)=u(W)\otimes _{u(W^{[0]})}M(\l)$ be the induced module,
referred to as the Kac module. As a vector space, $K(\l)\cong
u(W_{[-1]})\otimes _{\mathbf F}M(\l)$. Let $Y=D_1\dots
D_nd_1^{p-1}\dots d^{p-1}_m\in u(W_{[-1]})$. It is easy to see that
$K(\l)$ contains a unique simple $u(W_{[0]})$-submodule $Y\otimes
M(\l)$.
\begin{proposition} $K(\l)$ has the unique simple quotient
$L^{B_{max}}(\l)$.
\end{proposition}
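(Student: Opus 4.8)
The plan is to realise $K(\l)$ as a homomorphic image of the baby Verma module $M^{B_{max}}(\l)$ and then transport the known structure of the latter to it.

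First I would record what is already available about $M=M(\l)$: it is a simple $u(W^{[0]})$-module, $W^{[1]}M=0$, it is generated by a maximal vector $v\in M_{\bar 0}$ of weight $\l$ with respect to $B_{[0]}=H+N^+_{[0]}$ (so $N^+_{[0]}v=0$), and all $\l_i$ lie in $\mathbf F_p$. Since $M$ is simple over $u(W^{[0]})$ and $v\neq 0$, we have $M=u(W^{[0]})v$, and therefore $K(\l)=u(W)\otimes_{u(W^{[0]})}M=u(W)\,(1\otimes v)$; that is, $K(\l)$ is generated over $u(W)$ by the single vector $1\otimes v$. Next I would verify that $1\otimes v$ is a maximal vector for the Borel subalgebra $B_{max}=H+N^+_{[0]}+W^{[1]}$: it is a nonzero element of $K(\l)_{\bar 0}$, the torus $H$ acts on it through $\l$, $N^+_{[0]}$ annihilates it because $N^+_{[0]}v=0$ in $M$, and $W^{[1]}$ annihilates it because $W^{[1]}M=0$. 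Since the $\l_i$ lie in $\mathbf F_p$, the one-dimensional module $\mathbf F_\l$ is a genuine restricted $u(B_{max})$-module, so $M^{B_{max}}(\l)=u(W)\otimes_{u(B_{max})}\mathbf F_\l$ is defined; it is precisely the baby Verma module considered before Lemma~3.3, which has a unique proper maximal $\mathbf Z_2$-graded submodule with simple quotient $L^{B_{max}}(\l)$.

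Now, by the universal property of the induced module $M^{B_{max}}(\l)$, the maximal vector $1\otimes v$ determines a homomorphism of $W$-modules $\pi\colon M^{B_{max}}(\l)\to K(\l)$ carrying the canonical generator of $M^{B_{max}}(\l)$ to $1\otimes v$; since $1\otimes v$ generates $K(\l)$, $\pi$ is surjective. Let $Q$ be any simple quotient of $K(\l)$ --- such a $Q$ exists because $K(\l)$ is nonzero and finite-dimensional --- and compose the quotient map with $\pi$: this exhibits $Q$ as a simple quotient of $M^{B_{max}}(\l)$, whence $Q\cong L^{B_{max}}(\l)$. Thus $L^{B_{max}}(\l)$ is, up to isomorphism, the unique simple quotient of $K(\l)$. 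If one wants the sharper statement that $K(\l)$ has a unique maximal $\mathbf Z_2$-graded submodule, I would append the standard remark that every maximal submodule of $K(\l)$ pulls back under $\pi$ to a maximal submodule of $M^{B_{max}}(\l)$ containing $\ker\pi$; hence $\pi$ carries the radical of $M^{B_{max}}(\l)$ into the radical of $K(\l)$, inducing a surjection $L^{B_{max}}(\l)\twoheadrightarrow K(\l)/\mathrm{rad}\,K(\l)$, which forces the head of $K(\l)$ to be simple and $\mathrm{rad}\,K(\l)$ to be the unique maximal submodule.

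I do not anticipate a genuine obstacle: the whole argument reduces to the two inputs that $K(\l)$ is generated by the $B_{max}$-maximal vector $1\otimes v$ and that $M^{B_{max}}(\l)$ has a unique simple quotient, both already in hand. The only steps needing any care are bookkeeping --- checking that $1\otimes v$ is annihilated by all of $N^+_{[0]}+W^{[1]}$, for which the vanishing $W^{[1]}M=0$ is exactly the point, and noting that $\mathbf F_\l$ is a legitimate restricted $B_{max}$-module so that $M^{B_{max}}(\l)$ is literally the module whose structure has already been established.
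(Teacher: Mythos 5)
Your proof is correct and follows essentially the same route as the paper: both arguments exhibit $K(\l)$ as a homomorphic image of the baby Verma module $M^{B_{max}}(\l)=Z(\l)$ and then invoke the fact that $Z(\l)$ has a unique maximal proper $\mathbf Z_2$-graded submodule, hence a unique simple quotient $L^{B_{max}}(\l)$. The only (immaterial) difference is how the surjection is produced --- you obtain it directly from the universal property of induction applied to the $B_{max}$-maximal vector $1\otimes v$, whereas the paper applies the right-exact functor $u(W)\otimes_{u(W^{[0]})}-$ to an epimorphism $u(N^-_{[0]})v\to M(\l)$.
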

\begin{proof}Recall the baby verma module $Z(\l)=u(\g)v$
generated by the maximal vector $v$ of weight $\l$. Note that
$Z(\l)=u(B_{min})v=u(W_{[-1]})u(N^-_{[0]})v$. Since $M(\l)$ contains
the unique maximal vector (denoted also by $v$) of weight $\l$,
there is an $u(W_{[0]})$-epimorphism $\phi:
u(N^-_{[0]})v\longrightarrow M(\l).$  Since
$u(W)\otimes_{u(W^{[0]})} -$ is a right exact functor from the
category of $u(W^{[0]})$-modules to the category of $u(W)$-modules,
$\phi$ induces an epimorphism $\bar\phi: Z(\l)\longrightarrow
K(\l)$. Since $Z(\l)$ has the unique simple quotient
$L^{B_{max}}(\l)$, $K(\l)$ has the same unique simple quotient.
\end{proof}
\subsection{The category of $u(W)-T$-modules}
For  each superalgebra(resp. Lie superalgebra) $\mathscr{A}$, we
denote
$$\text{Aut}(\mathscr{A})=\{f|f\quad \text{is an automorphism of } \mathscr{A},
f(\mathscr{A}_{\bar i})=\mathscr{A}_{\bar i}, i=0,1\}.$$ Each $f\in
\text{Aut}(\mathscr{A})$ is called an even automorphism of
$\mathscr{A}$.  Inspired by \cite{cs, J1}, we introduce the
$u(W)-T$-module category in this section. \par Let
$$\text{Aut}^*(W)=\{\Phi\in \text{Aut}(W)|\Phi(W_{[i]})\subseteq
\Phi(W_{[i]}),$$$$ \Phi(W_i)\subseteq W_i, \Phi(x^{[p]})=\Phi
(x)^{[p]}, \text{for all}\quad  x\in W_{\0}\}.$$ We define two types
gradation on the super commutative algebra
 $\L(m,n)$ as follows:\par
 (1) Type I: Let $$\L(m,n)_{[i]}=\langle x^a\xi_I||a|+|I|=i, 0\leq a\leq \tau,
 I\in \mathfrak{I}\rangle.$$ Then we get $\L(m,n)=\oplus ^{n+m(p-1)}_{i=0} \L(m,n)_{[i]}$.\par
 (2) Type II: Let $$\L(m,n)_i=\langle x^a\xi_I||I|=i,I\in \mathfrak
 I\rangle.$$ Then we have $\L(m,n)=\oplus^n_{i=0}\L(m,n)_i$.\par

  Each $$\phi=:(\phi_1,\phi_2)\in GL(\mathfrak A(m,1)_{[1]})\times
  GL(\L(n)_{[1]})$$  can be extended to an element of $\text{Aut}(\L(m,n))$ which preserves both
  types of
  gradations. We denote it also by $\phi$. Now we define
 $\Phi\in \text{Aut}(W)$ by $\Phi(x)=\phi x\phi^{-1}$,  $x\in W$.
  Then it is easy to see that $\Phi(x^{[p]})=\Phi(x)^{[p]}$, for all $x\in W_{\0}$. Let $\theta
 (\phi)=\Phi$. Then $\theta$ defines
 a group homomorphism :
 $$GL(\mathfrak A(m,1)_{[1]})\times
  GL(\L(n)_{[1]})\longrightarrow
 \text{Aut}^*(W).$$
 \begin{proposition}   $\theta$ is a  monomorphism. \end{proposition}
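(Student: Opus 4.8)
The plan is to compute the kernel of $\theta$ directly and show it is trivial; since $\theta$ is already known to be a group homomorphism, this yields injectivity. So suppose $\phi=(\phi_1,\phi_2)$ lies in $\ker\theta$, i.e.\ $\Phi:=\theta(\phi)$ is the identity automorphism of $W$. By definition $\Phi(x)=\phi x\phi^{-1}$ for every $x\in W=\text{Der}(\L(m,n))$, where $\phi$ denotes the (even) algebra automorphism of $\L(m,n)$ extending the pair $(\phi_1,\phi_2)$. Hence $\Phi=\text{id}$ means exactly that $\phi$, viewed as an $\mathbf F$-linear operator on $\L(m,n)$, commutes with every derivation of $\L(m,n)$; in particular $\phi$ commutes with each of $d_1,\dots,d_m$ and $D_1,\dots,D_n$, with no signs appearing since $\phi$ is even.

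Next I would read off $\phi_1$ and $\phi_2$ by testing these commutation relations on the linear generators. Being an algebra automorphism, $\phi$ fixes $1$; being the bigrading-preserving extension of $(\phi_1,\phi_2)$, it maps $\mathfrak A(m,1)_{[1]}=\langle x_1,\dots,x_m\rangle$ into itself via $\phi_1$ and $\L(n)_{[1]}=\langle\xi_1,\dots,\xi_n\rangle$ into itself via $\phi_2$. Write $\phi(x_j)=\sum_k c_{kj}x_k$. Recalling $d_i(x_k)=\d_{ik}\cdot 1$ and using $d_i\phi=\phi d_i$, we obtain
$$c_{ij}\cdot 1=\sum_k c_{kj}\,d_i(x_k)=d_i\bigl(\phi(x_j)\bigr)=\phi\bigl(d_i(x_j)\bigr)=\phi(\d_{ij}\cdot 1)=\d_{ij}\cdot 1,$$
so $c_{ij}=\d_{ij}$ for all $i,j$ and therefore $\phi_1=\text{id}$. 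In exactly the same way, writing $\phi(\xi_j)=\sum_k e_{kj}\xi_k$ and using $D_i(\xi_k)=\d_{ik}\cdot 1$ together with $D_i\phi=\phi D_i$, one gets $e_{ij}=\d_{ij}$, hence $\phi_2=\text{id}$. Thus $\phi=(\text{id},\text{id})$ is the identity element of $GL(\mathfrak A(m,1)_{[1]})\times GL(\L(n)_{[1]})$, so $\ker\theta$ is trivial and $\theta$ is a monomorphism.

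I do not expect a genuine obstacle here: the argument rests only on the elementary identities $d_i(x_j)=D_i(\xi_j)=\d_{ij}\cdot 1$ and on $\phi(1)=1$, and the one point deserving attention is the grading bookkeeping that confines $\phi(x_j)$ to $\langle x_1,\dots,x_m\rangle$ and $\phi(\xi_j)$ to $\langle\xi_1,\dots,\xi_n\rangle$ --- which is precisely what the construction of $\phi$ as the bigrading-preserving algebra extension of $(\phi_1,\phi_2)$ guarantees. If one wishes to bypass even this remark, one may first note that the assignment $(\phi_1,\phi_2)\mapsto\phi\in\text{Aut}(\L(m,n))$ is itself injective, because $\L(m,n)$ is generated as an $\mathbf F$-algebra by $\mathfrak A(m,1)_{[1]}\oplus\L(n)_{[1]}$ (for $p>2$ the divided powers $x^{k\e_i}$ with $k\le p-1$ lie in the subalgebra generated by the $x_i$), and then conclude from $\Phi=\text{id}$ that $\phi=\text{id}_{\L(m,n)}$, whence $(\phi_1,\phi_2)$ is the identity.
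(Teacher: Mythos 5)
Your proof is correct and follows essentially the same route as the paper: assume $\theta(\phi)=\mathrm{Id}$, test the resulting commutation of $\phi$ with the derivations $d_i$ and $D_i$ on the generators $x_j$ and $\xi_j$, and use $d_i(x_j)=D_i(\xi_j)=\delta_{ij}$ to force the matrix of $\phi$ to be the identity. The only cosmetic difference is that the paper writes the computation in terms of $\phi^{-1}$ rather than $\phi$.
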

 \begin{proof} Let $\phi=(\phi_1,\phi_2)\in GL(\mathfrak A(m,1)_{[1]})\times
  GL(\L(n)_{[1]})$ such that $\theta(\phi)=Id$. Then we have
 $\phi_1 D_i\phi_1^{-1}=D_i$ for each $i=1,\dots,n$ and
 $\phi_2d_j\phi_2^{-1}=d_j$ for each $j=1,\dots,m$. Let
 $\phi_1^{-1}(\xi_1,\dots,\xi_n)=(\xi_1,\dots,\xi_n)A$. So
 $A=(a_{ij})_{n\times n}$ is nonsingular. Then from
 $\phi_1D_i\phi^{-1}_1(\xi_j)=D_i(\xi_j)=\delta_{ij}$, we get
 $A=I_n$. This implies that $\phi_1=Id$. Similarly we
 get $\phi_2=Id$, so that $\phi=Id$. This completes the proof.

 \end{proof}

Each $\phi\in\text{Aut}(\L(m,n))$ is uniquely determined by its
action on $$\xi_1,\dots, \xi_n,x_1,\dots, x_m.$$ Then we see that
$$\{t\in \text{Aut}(\L(m,n))|t(\xi_i)=t_i\xi_i,1\leq i\leq n,
t(x_j)=t_{n+j}x_j,1\leq j\leq m,$$$$ t_i\in \mathbf F^*, i=1,\dots
,m+n\}$$ is a maximal torus of the algebraic group
$\text{Aut}(\L(m,n)$, which is isomorphic to
$$T=:\{diag(t_1,\dots,t_{m+n})|t_i\in \mathbf F^*, i=1,\dots,m+n\}.$$
Clearly $\text{Lie}(T)=H$. Let $X(T)$ be the character group of $T$.
For each $t\in T$, we  denote $\L_i(t)=t_i$, $i=1,\dots, m+n$. Then
we get $$X(T)=\mathbf Z\L_1+\dots +\mathbf Z\L_{m+n}.$$ For $t\in T$
and $h\in H$, we have
$$Adt(x^a\xi_Id_j)=(\Pi^m_{i=1}t^{a_i}_i\Pi_{l\in
I}t_{m+l})t^{-1}_jx^a\xi_Id_j=(\sum^m_{i=1}a_i\e_i+\sum_{l\in
I}\eta_l-\e_j)(t)x^a\xi_Id_j;$$
$$Adt(x^a\xi_ID_j)=(\Pi^m_{i=1}t^{a_i}_i\Pi_{l\in
I}t_{m+l})t^{-1}_{m+j}x^a\xi_ID_j=(\sum^m_{i=1}a_i\e_i+\sum_{l\in
I}\eta_l-\eta_{j})(t)x^a\xi_ID_j;$$
$$[h, x^a\xi_Id_j]=(\sum^m_{i=1}a_i\e_i+\sum_{l\in
I}\eta_l-\e_j)(h)x^a\xi_Id_j;$$
$$[h, x^a\xi_ID_j]=(\sum^m_{i=1}a_i\e_i+\sum_{l\in
I}\eta_l-\eta_j)(h)x^a\xi_ID_j.$$ Therefore the action of $T$ on $W$
coincide with that of $H$. From the definition of the p-map it is
easy to check that $Adt(x^{[p]})=Adt(x)^{[p]}$ for each $x\in
W_{\0}$.  For each $u=z_1z_2\dots z_k\in U(W) $, $z_i\in W$, we
define $Adt (u)=Adt(z_1)\dots Adt(z_k)$, then $u(W)=U(W)/I$ is also
a $T$-module, where $I$ is the two-sided ideal of $U(W)$ generated
by elements $\{x^p-x^{[p]}|x\in W_{\0}\}$. Next we define the
Jantzen's $u(\g)-T$-module category.
\begin{definition}\cite{cs,J1} Let $\g$ be a restricted Lie superalgebra with
a maximal torus $H$.  Let $$\text{Aut}^{res}(\g)=\{\Phi\in
\text{Aut}(\g)|\Phi(x^{[p]})=\Phi(x)^{[p]}, x\in \g_{\0}\}.$$ Assume
$T$ is  a diagonalizable subgroup of $\text{Aut}^{res}(\g)$  with
$\text{Lie}(T)=H$. A finite dimensional super space
$V=V_{\0}+V_{\1}$ is called a $u(\g)-T$-module, if $V$ is
$u(\g)$-module and each $V_{\bar i}$, $i=1,2$ is a $T$-module and
satisfies:\par (1) The action of $H$ coming from $\g$ and from $T$
coincide;\par (2) $t(uv)=(Adt u)v$, for $v\in V$, $t\in T$, $u\in
u(\g)$.
\end{definition}
Applying similar arguments as that in \cite{cs}, one gets that the
kernel and cokernel of a homomorphism of $u(\g)-T$-modules are also
$u(\g)-T$-modules.\par Let $\g=W$ and $T$ be the torus given
earlier. We denote by $\mathfrak {M}$  the category of
$u(W)-T$-modules. For
  $x=f\otimes m\in K(\l)=u(W_{[-1]})\otimes _{\mathbf F}M(\l)$,
  we define $tx=Adt(f)\otimes tm$, then $K(\l)\in  Obj.\mathfrak {M}$.
 Similarly we have $L^{B_i}(\l)$, $L^{B'_i}(\l)$, $Z(\l)\in Obj.\mathfrak {M}$. For each
 $M\in Obj.\mathfrak {M}$, we define the length of $M$,
 denoted $l(M)$ as the number of $Ad\bar T$ weights of $M$ minus
 one, where $\bar T=(t,\dots, t)\subseteq T$. Recall the
 sum of the negative root spaces of $W_{[0]}\cong gl(m,n)$:  $$N^-_{0}=\langle
 x_id_j|1\leq j<i\leq m\rangle +\langle \xi_jD_i|i<j\rangle +\langle
 x_iD_j|1\leq i\leq m, 1\leq j\leq n\rangle.$$  It
  is easy to see that $Ad\bar T|_{u(N^-_0)}=Id.$ Therefore the
  set of weights of $L^{B_{max}}(\l)$ in $\mathfrak M$ is $$
  \{\l(\bar T),\l(\bar T)-1,\dots,\l'(\bar T)\}.$$ It follows that $\l(\bar
  T)-\l'(\bar T)\leq n+m(p-1)$. The equality holds if and only if $\l$ is
  typical.
  \subsection{The simplicity of the Kac-module}
 Let $M=M_{\0}\oplus M_{\1}$ be a simple $u(W_{[0]})$-module, considered as
 a simple $u(W^{[0]})$-module by letting $W^{[1]}M=0$. By Lemma 3.3,  $M$ is
   generated by the
 unique maximal vector of weight $\l$. We denote $M$ by $V(\l)$.
  Recall the Kac module  $K(\l)=:K(V(\l))=u(W)\otimes _{u(W^{[0]})}V(\l)$.
  \begin{theorem} $K(\l)$ is simple if and only if $\l$
  is typical.
  \end{theorem}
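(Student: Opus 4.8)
The plan is to prove the theorem in two directions, using the $u(W)$-$T$-module machinery developed in the previous subsection together with the sequence of root reflections.

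First, suppose $\lambda$ is atypical, i.e.\ $\lambda\in\L^a$. I would show $K(\l)$ is not simple by exhibiting a proper nonzero submodule. The unique simple quotient of $K(\l)$ is $L^{B_{max}}(\l)$ by Proposition~3.11. By the remark at the end of the $u(W)$-$T$ subsection, the minimal weight $\l'$ of $L^{B_{max}}(\l)$ satisfies $\l(\bar T)-\l'(\bar T)<n+m(p-1)$ precisely when $\l$ is atypical (Corollary~3.9 identifies $\l'=\l-\sum\eta_i-\sum(p-1)\e_j$ exactly in the typical case; when $\l\in\L^a$, Proposition~3.8 shows the relevant reflections $r_{\e_s},\dots$ or $r_{\eta_t},\dots$ collapse, so $\l'$ is strictly larger). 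On the other hand, $K(\l)\cong u(W_{[-1]})\otimes_{\mathbf F}V(\l)$ as a vector space, and $\mathrm{Ad}\bar T$ acts trivially on $u(N^-_0)$; hence the $\bar T$-weights occurring in $K(\l)$ range over $\{\l(\bar T),\l(\bar T)-1,\dots,\l(\bar T)-n-m(p-1)\}$ (realized by the lowest-degree element $Y=D_1\cdots D_n d_1^{p-1}\cdots d_m^{p-1}$ of $u(W_{[-1]})$). Thus $l(K(\l))=n+m(p-1)>\l(\bar T)-\l'(\bar T)=l(L^{B_{max}}(\l))$, so the canonical epimorphism $K(\l)\twoheadrightarrow L^{B_{max}}(\l)$ has nonzero kernel, and $K(\l)$ is not simple.

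Conversely, suppose $\l$ is typical. I would show $K(\l)=L^{B_{max}}(\l)$ by showing $K(\l)$ has no proper nonzero submodule; equivalently, that the submodule $Y\otimes V(\l)$ — the unique simple $u(W_{[0]})$-submodule of $K(\l)$ — is not contained in any proper submodule, i.e.\ generates all of $K(\l)$, or dually that $K(\l)$ is generated by its top $V(\l)$ and has simple socle equal to a copy of the dual Kac data. The cleanest route is via the $u(W)$-$T$ length: any submodule $N\subseteq K(\l)$ contains a maximal vector with respect to $B_{min}$ (a lowest weight vector), hence contains the unique minimal vector of $K(\l)$; and by Corollary~3.9 together with Corollary~3.7, when $\l$ is typical the sequence of reflections $r_{\e_m},\dots,r_{\e_1},r_{\eta_n},\dots,r_{\eta_1}$ carries $L^{B_{max}}(\l)$ all the way down to $L^{B_{min}}(\l-\sum(p-1)\e_i-\sum\eta_j)$, so $L^{B_{max}}(\l)$ already has $\bar T$-length $n+m(p-1)$, matching $l(K(\l))$. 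Since the epimorphism $K(\l)\twoheadrightarrow L^{B_{max}}(\l)$ cannot decrease length, it is an isomorphism, and $K(\l)$ is simple.

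The main obstacle is the atypical direction's claim that the reflection sequence genuinely collapses — i.e.\ that for $\l\in\L^a$ one of the maps $L^{B_{i+1}}(\l)\to L^{B_i}(\l-r\e_i)$ in Proposition~3.8 has $r<p-1$ (or the $\eta$-analogue degenerates), so that $\l'(\bar T)$ is strictly larger than $\l(\bar T)-n-m(p-1)$. This is exactly the content of Proposition~3.8, so the work is to track the weight bookkeeping carefully through the two families of atypical weights in $\L^a$ and confirm the resulting length drop in each case; once that is in hand, comparing $l(K(\l))$ with $l(L^{B_{max}}(\l))$ finishes both directions uniformly.
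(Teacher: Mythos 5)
Your proposal is correct and follows essentially the same route as the paper: the atypical direction via the length comparison $l(L^{B_{max}}(\l))<n+m(p-1)=l(K(\l))$, and the typical direction via Corollaries 3.7/3.9, which identify the simple submodule generated by the unique minimal vector with $L^{B_{min}}(\l')\cong L^{B_{max}}(\l)$. The one loosely stated inference is ``the epimorphism cannot decrease length, so it is an isomorphism'' --- equal lengths alone do not force injectivity; you need your preceding observation that any nonzero kernel contains the minimal vector, hence all of $Y\otimes V(\l)$, which would kill the bottom $\bar{T}$-weight of the quotient and contradict $l(L^{B_{max}}(\l))=n+m(p-1)$ (the paper instead finishes by noting that the simple submodule, being isomorphic to $L^{B_{max}}(\l)$, must contain the unique maximal vector of $K(\l)$, which generates it).
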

\begin{proof}If $\l$ is typical, then we have
$\l'=\l-\sum^n_{i=1}\eta_i-\sum^m_{i=1}(p-1)\e_{i}$. Since  the
unique  simple $\mathbf Z_2$-graded submodule of $K(\l)$ contains
$\Pi^n_{i=1}D_i\Pi^m_{j=1}d^{p-1}_j\otimes V(\l)$, it must be
$L^{B_{min}}(\l')\cong L^{B_{max}}(\l)$. Since $K(\l)$ contains a
unique maximal vector,  we get $K(\l)=L^{B_{max}}(\l)$, so that
$K(\l)$ is simple. If $\l$ is atypical, then
$l(L(\l))<n+m(p-1)=l(K(\l))$, hence $K(\l)$ is not simple.
\end{proof}

Recall the restricted Witt algebra $L=W(m,1)=L_{-1}+L_0+\dots +L_s$,
where $L_0\cong gl(m)$ and  $H=\langle x_1d_1,\dots, x_md_m\rangle$
is its maximal torus. Let $\e_i(x_jd_j)=\delta_{ij}$, and denote
$\omega_i=(p-1)\e_{i+1}+\dots +(p-1)\e_m$, $i=0,1,\dots,m$, where
$\omega_m=0$.  The weights $\omega_0,\dots,\omega_m$ are called
exceptional weights\cite{sh,ho}.
\begin{corollary}\cite{sh}
 Denote $L^i=\sum _{j\geq i} L_j$ and let $M(\l)$ be a simple $u(L_0)$-module
 generated by the maximal vector of weight $\l$. $M(\l)$  is considered
 as a simple $u(L^0)$-module by letting $L^1M(\l)=0$. Then
the induced module $K(\l)=u(L)\otimes _{u(L^0)}M(\l)$ is simple if
and only if $\l$ is not exceptional.
\end{corollary}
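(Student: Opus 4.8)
The plan is to deduce this corollary directly from Theorem~3.13 by setting $n=0$. When $n=0$ the Witt superalgebra $W(m,0,1)$ has no odd generators $\xi_i, D_i$, so $W(m,0,1)=W(m,1)=L$ as restricted Lie algebras, and the $\mathbf{Z}_2$-gradation collapses to the purely even piece. Under this specialization the $\mathbf{Z}$-grading $W=\sum_{i=-1}^{s}W_{[i]}$ becomes the grading $L=L_{-1}+L_0+\cdots+L_s$ with $s=m(p-1)-1$, and $W_{[0]}\cong gl(m|0)=gl(m)\cong L_0$ by Lemma~2.4. Consequently a simple $u(W_{[0]})$-module $V(\l)$ is exactly a simple $u(L_0)$-module $M(\l)$, the induced (Kac) module $K(\l)=u(W)\otimes_{u(W^{[0]})}V(\l)$ becomes $u(L)\otimes_{u(L^0)}M(\l)$, and the notation $W^{[i]}=\sum_{j\geq i}W_{[j]}$ matches $L^i=\sum_{j\geq i}L_j$.

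The second and essential step is to identify the atypical weights of $W(m,0,1)$ with the exceptional weights of $W(m,1)$. I would recall the definition of $\L^a$ given before Proposition~3.6: when $n=0$ the first set $\{a\eta_i+\eta_{i+1}+\cdots+\eta_n+(p-1)\e_1+\cdots+(p-1)\e_n\mid 1\le i\le n,\ a\in\mathbf F_p\}$ is empty (there is no index $i$ with $1\le i\le n=0$), and the second set $\{(p-1)\e_{j+1}+\cdots+(p-1)\e_m\mid 0\le j\le m\}$ is precisely $\{\omega_0,\omega_1,\dots,\omega_m\}$ in the notation of the corollary, with $\omega_m=0$. Hence $\l\in\L^a$ if and only if $\l$ is exceptional. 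This is the point already flagged in the running text ("Note that if $n=0$, the atypical weights are exactly the exceptional weights\dots"), so the identification is immediate once the definitions are unwound.

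Combining the two steps: by Theorem~3.13 the Kac module $K(\l)$ for $W=W(m,0,1)$ is simple if and only if $\l$ is typical, i.e.\ if and only if $\l\notin\L^a$, i.e.\ if and only if $\l$ is not exceptional; translating back through the identifications of the previous paragraph gives exactly the statement that $K(\l)=u(L)\otimes_{u(L^0)}M(\l)$ is simple if and only if $\l$ is not exceptional. I do not anticipate a genuine obstacle here: the only thing requiring a little care is checking that the restricted structure (the $p$-map, the reduced enveloping algebra $u(W)$, and the torus $H$) of $W(m,0,1)$ coincides with that of the classical restricted Witt algebra $W(m,1)$ of \cite{sh,st,sf}, which follows from the formula $(x^a d_j)^{[p]}$ displayed in Section~2 and the fact that for $n=0$ that formula is exactly the classical $p$-th power map. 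With that bookkeeping done, the corollary is a direct specialization.
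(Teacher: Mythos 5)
Your proposal is correct and follows exactly the route the paper intends: the corollary is stated without proof as the $n=0$ specialization of Theorem~3.13, relying on the earlier observation (made right after the definition of $\L^a$) that for $n=0$ the atypical weights are precisely the exceptional weights $\omega_0,\dots,\omega_m$. Your unwinding of the definitions and the bookkeeping on the restricted structure match the paper's implicit argument.
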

Let $m=0$. Then $W(m,n,1)=W(n)$ is the restricted Lie superalgebra
given in \cite{k1, vs}. Then we get an analogues conclusion as that
in \cite{vs} in modular case.
\begin{corollary}
 Denote $W(n)^i=\sum _{j\geq i} W(n)_j$ and let $M(\l)$ be a simple $u(W(n)_0)$-module, considered
 as a simple $u(W(n)^0)$-module by letting $W(n)^1M=0$. Then
the induced module $I(\l)=u(W(n))\otimes _{u(W(n)^0)}M(\l)$ is
simple if and only if $\l\notin \L^a$, where
$$\L^a=\{a\eta_i+\eta_{i+1}+\dots +\eta_n|1\leq i\leq n, a\in \mathbf
F_p\}.$$
\end{corollary}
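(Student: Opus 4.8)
The statement is the special case $m=0$ of Theorem 3.13, so the plan is simply to check that every ingredient of that theorem specializes correctly when $m=0$, and then quote it.

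First I would record what the defining data of $W(m,n,1)$ become when $m=0$. Here $A(0,1)=\{0\}$, the divided power factor $\mathfrak A(0,1)=\mathbf F$, $\L(0,n)=\Lambda(n)$, and $W(0,n,1)=W(n)$, whose basis vectors are exactly the $\xi_ID_j$. Since $|a|=0$ for every $a$ occurring, the gradation $W=\sum_{i=-1}^{s}W_{[i]}$ with $s=m(p-1)+n-1=n-1$ reduces to $W_{[k]}=\langle\xi_ID_j\mid |I|=k+1\rangle=W(n)_k$; thus the $[\,i\,]$-gradation of $W(n)$ coincides with the gradation $W(n)=\sum W(n)_i$, so $W^{[0]}=W(n)^{0}$, $W^{[1]}=W(n)^{1}$ and $W_{[0]}=W(n)_0$. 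By Lemma 2.2 with $m=0$, $W_{[0]}\cong gl(0|n)$, which is purely even and equals $gl(n)$ (indeed $\overline{\xi_iD_j}=\0$). Hence a simple $u(W_{[0]})$-module is precisely a simple $u(W(n)_0)$-module, and promoting it to a $u(W^{[0]})$-module by $W^{[1]}M=0$ is exactly the construction in the corollary; therefore $M(\l)=V(\l)$ and $K(\l)=u(W)\otimes_{u(W^{[0]})}V(\l)=u(W(n))\otimes_{u(W(n)^0)}M(\l)=I(\l)$.

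Next I would check that the set $\L^a$ of atypical weights of $W(m,n,1)$ specializes to the set $\L^a$ stated in the corollary. When $m=0$ one has $\L=\mathbf F\eta_1\oplus\dots\oplus\mathbf F\eta_n$, since there are no $\e_j$. In the first family defining $\L^a$, the tail $(p-1)\e_1+\dots+(p-1)\e_m$ is an empty sum, so that family becomes $\{a\eta_i+\eta_{i+1}+\dots+\eta_n\mid 1\le i\le n,\ a\in\mathbf F_p\}$; the second family $\{(p-1)\e_{j+1}+\dots+(p-1)\e_m\mid 0\le j\le m\}$ consists, for $m=0$, of the single element $0$ (the case $j=m$), which already lies in the first family (take $i=n$, $a=0$). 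Hence $\L^a$ becomes exactly $\{a\eta_i+\eta_{i+1}+\dots+\eta_n\mid 1\le i\le n,\ a\in\mathbf F_p\}$, and the condition that $\l$ be typical now means precisely that $\l\notin\L^a$.

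With these identifications in hand, Theorem 3.13 applied to $W=W(0,n,1)=W(n)$ says that $K(\l)=I(\l)$ is simple if and only if $\l$ is typical, that is, if and only if $\l\notin\L^a$, which is the assertion. I expect no real obstacle here: the only things requiring care are the bookkeeping that identifies the two gradations of $W(n)$ and the empty-sum conventions in the definition of $\L^a$; once these are matched the corollary is immediate. One could alternatively specialize the chain of isomorphisms in Propositions 3.6 and 3.8 and Corollaries 3.7 and 3.9 directly at $m=0$, but routing through Theorem 3.13 is the cleanest, since it already packages the length argument in the $u(W)-T$-module category that separates the typical and atypical cases.
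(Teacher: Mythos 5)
Your proposal is correct and matches the paper's intent exactly: the paper states this corollary without proof, as an immediate specialization of Theorem 3.13 to $m=0$, and your argument consists precisely of the bookkeeping (identifying the two gradations of $W(n)$, noting $W_{[0]}\cong gl(0|n)=gl(n)$ is purely even, and checking that the empty $\e$-sums collapse $\L^a$ to the stated set) that justifies this specialization. No gaps.
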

\begin{corollary} Let $\g=W(m,n,1)$. There are totally $p^{m+n}$ distinct(up to
isomorphism) simple $u(\g)$-modules. They are represented by
$\{L^{B_{max}}(\l)|\l\in \L\}$.
\end{corollary}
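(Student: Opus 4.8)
The plan is to count the distinct simple $u(\g)$-modules directly using the classification machinery developed above. First I would recall that by Lemma 3.1 every simple $u(W^{[0]})$-module is a simple $u(W_{[0]})$-module, and by Lemma 3.3 each such module is generated by a unique maximal vector of some weight $\l$, with $\l$ determined by the module. Since $W_{[0]}\cong gl(m|n)$ by Lemma 2.7, and the maximal torus $H$ has basis $\{\xi_1D_1,\dots,\xi_nD_n,x_1d_1,\dots,x_md_m\}$ with $h_i^{[p]}=h_i$, each weight $\l$ lies in $\mathbf F_p^{m+n}$; conversely every such tuple arises. So the simple $u(W_{[0]})$-modules, hence the modules $V(\l)$, are parametrized by $\l\in\mathbf F_p^{m+n}$, giving $p^{m+n}$ of them. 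The point is then to pass from these to the simple $u(\g)$-modules via the Kac modules $K(\l)$.

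Next I would invoke Proposition 3.12: each Kac module $K(\l)$ has a unique simple quotient $L^{B_{max}}(\l)$. Every simple $u(\g)$-module $L$ is a quotient of some $K(\l)$: take a maximal vector of $L$ with respect to $B_{max}$, which exists because $W^{[1]}$ acts nilpotently so a nonzero $W^{[1]}$-invariant exists, inside which $W_{[0]}\cong gl(m|n)$ has a highest-weight vector; this gives a nonzero map from the appropriate $K(\l)$ onto $L$, so $L\cong L^{B_{max}}(\l)$. Hence the simple $u(\g)$-modules are exactly $\{L^{B_{max}}(\l)\mid \l\in\L\}$, where I identify $\L$ (or rather the relevant lattice mod $p$) with $\mathbf F_p^{m+n}$. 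It remains to check these $p^{m+n}$ modules are pairwise non-isomorphic. This follows because $L^{B_{max}}(\l)$ determines $\l$: by Lemma 3.3 it has a unique maximal vector with respect to $B_{max}$, and its weight, read off on the torus $H$, is exactly $\l$; two modules with different highest weights cannot be isomorphic.

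The main obstacle is the non-isomorphism claim, i.e. showing $L^{B_{max}}(\l)\cong L^{B_{max}}(\mu)$ forces $\l=\mu$. Here I would argue that the $B_{max}$-highest weight is an isomorphism invariant: any isomorphism carries the unique $B_{max}$-maximal vector of one to a $B_{max}$-maximal vector of the other (up to scalar, using that $H\subseteq B_{max}$ and the grading $W=\sum_i W_{[i]}$ is respected in the sense that the top weight is characterized as the one annihilated by $N^+_{[0]}+W^{[1]}$), and maximal vectors are unique by Lemma 3.3. Since the weight is then forced to agree, $\l=\mu$. Combining: there are exactly $p^{m+n}$ simple $u(\g)$-modules, represented by $\{L^{B_{max}}(\l)\mid\l\in\L\}$, which is the assertion. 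I do not expect the surjectivity-of-the-parametrization step to cause trouble since it is the standard highest-weight argument, already implicit in Proposition 3.12; the only care needed is bookkeeping that $\l$ ranges over $\mathbf F_p^{m+n}$ and that distinct tuples give genuinely distinct simples.
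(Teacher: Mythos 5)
Your proposal is correct and takes essentially the same route as the paper: locate a $B_{max}$-maximal vector in an arbitrary simple $u(\g)$-module, realize that module as the unique simple quotient $L^{B_{max}}(\l)$ of the Kac module $K(\l)$ (the paper's Proposition 3.10, which you cite as 3.12), and separate distinct weights $\l\in\mathbf F_p^{m+n}$ via the uniqueness of the maximal vector from Lemma 3.3. The only differences are cosmetic (your explicit remark that a maximal vector exists because $W^{[1]}$ acts nilpotently is a detail the paper leaves implicit), so no further comparison is needed.
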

\begin{proof} Let $M=M_{\0}+M_{\1}$ be a simple $u(\g)$-module. Let
$v\in M$ be a maximal vector(with respect to $B_{max}$). It is no
loss of generality to assume $v\in M_{\0}$. Then $u(N^-_{[0]})v
=u(W_{[0]})v$ is a $\mathbf Z_2$-graded $u(W_{[0]})$-submodule
annihilated by $W^{[1]}$. Let $V$ be a simple $u(W_{[0]})$-submodule
of $u(N^-_{[0]})v$. By Lemma 3.3,  $V$ contains a unique maximal
vector of weight $\l$. Denote $V$ by $V(\l)$. Then the inclusion map
$V(\l)\longrightarrow M$ induced a $u(\g)$-epimorphism $\phi:
K(\l)\longrightarrow M$. By Proposition 3.10, we have $M\cong
L^{B_{max}}(\l)$. Since each $L^{B_{max}}(\l)$ contains a unique
maximal vector, $L^{B_{max}}(\l)\cong L^{B_{max}}(\mu)$ if and only
if $\l=\mu$ for any $\l$, $\mu\in \L$. This completes the proof.
\end{proof}

\section{Nonrestricted simple modules}
 In this section we study the simple $u(W,\chi)$-modules
with $\chi\neq 0$.
\subsection{Induced modules of type I}
Recall $W=W_{[-1]}+W_{[0]}+\dots+W_{[s]}$, $s=n+(p-1)m-1$.  For each
$-1\leq i\leq s$, let $W_{[i],\bar j}=W_{[i]}\cap W_{\bar j}$,
$j=0,1$. Then we have
$$W_{[i]}=W_{[i],\0}+W_{[i]\1}.$$  We extend the character $\chi\in
W^*_{\0}$ to $W^*$ by letting $\chi(W_{\1})=0$.  Then we define the
height of $\chi$ by $\text{ht}(\chi)=min\{i|\chi(W^{[i]})=0\}$.
Clearly we have $-1\leq \text{ht}(\chi)\leq s+1$ for each $\chi$.
\begin{definition}Let $\chi\in W^*_{\0}$ with $\text{ht}(\chi)=l+1>1$. If
there are $m+n$ elements $f_1,\dots, f_{m+n}\in W_{[l+1]}$ such that
$$\text{det}((f_1,\dots, f_{m+n})^T(D_1,\dots,D_n,d_1,\dots,d_m))\neq
0,$$ then we say that $\chi$ is nonsingular.
\end{definition}
Let $M=M_{\0}+M_{\1}$ be a simple $u(W^{[0]},\chi)$-supermodule.
Define the induced module
$$K^{\chi}(M)_1=u(W,\chi)\otimes_{u(W^{[0]},\chi)}M.$$ We call it the Kac module of
type I.  By applying a similar argument as that for \cite[Th. 2.5
]{z1}, one gets
\begin{theorem}Let $\text{ht}(\chi)=s+1>1$ and $\chi$ be
nonsingular. Then the Kac module $K^{\chi}(M)_1$ is simple.
\end{theorem}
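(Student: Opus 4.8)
The plan is to transplant the argument of \cite[Th.\ 2.5]{z1} into the $\mathbf Z_2$-graded setting. First I would record the vector-space identification
$$K^{\chi}(M)_1\;\cong\;u(W_{[-1]},\bar\chi)\otimes_{\mathbf F}M,$$
coming from the graded PBW decomposition $u(W,\chi)\cong u(W_{[-1]},\bar\chi)\otimes u(W^{[0]},\chi)$ of right $u(W^{[0]},\chi)$-modules, where $\bar\chi=\chi|_{W_{[-1]}}$; since $[d_i,d_j]=[D_i,D_j]=[D_i,d_j]=0$ the algebra $u(W_{[-1]},\bar\chi)$ has basis $\{d^{\mathbf a}D^{\mathbf b}\mid 0\le a_r\le p-1,\ b_i\in\{0,1\}\}$ with $D_i^2=0$ and $d_r^{\,p}=\bar\chi(d_r)^{p}$. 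Under this identification $1\otimes M$ is a $u(W^{[0]},\chi)$-submodule isomorphic to $M$, and $u(W,\chi)\cdot(1\otimes M)=K^{\chi}(M)_1$; so it suffices to prove that every nonzero graded $W$-submodule $N$ of $K^{\chi}(M)_1$ meets $1\otimes M$, for then $N\cap(1\otimes M)$ is a nonzero graded $u(W^{[0]},\chi)$-submodule of the simple module $1\otimes M\cong M$, hence equals it, whence $N=K^{\chi}(M)_1$.

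Next I would set up the engine that detects $1\otimes M$. Filter $K^{\chi}(M)_1$ by $W_{[-1]}$-monomial degree, $\bar K^{(j)}=\langle d^{\mathbf a}D^{\mathbf b}\mid|\mathbf a|+|\mathbf b|\le j\rangle\otimes M$; because $[W_{[i]},W_{[-1]}]\subseteq W_{[i-1]}$ and $W_{[l]}=0$ for $l<-1$, this filtration is stable under $W^{[0]}$, with bottom term $\bar K^{(0)}=1\otimes M$, and $W^{[0]}$ acts on each associated graded piece essentially through its action on $M$. The two hypotheses now supply the operators that lower this degree. Since $\text{ht}(\chi)=s+1>1$ we have $s\ge1$ and $\chi$ does not vanish on $W_{[s]}$; as $\chi|_{W_{\1}}=0$ there is an even $\phi\in W_{[s]}$ with $\chi(\phi)\ne0$, and because $s\ge1$ the $p$-map formulas for $W$ give $\phi^{[p]}=0$, so $\phi^{p}=\chi(\phi)^{p}\ne0$ in $u(W,\chi)$; thus $\phi$ is a unit of $u(W,\chi)$ and acts invertibly on $M$. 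Since $\chi$ is nonsingular there are $f_1,\dots,f_{m+n}\in W_{[s]}$ whose bracket pairing with $D_1,\dots,D_n,d_1,\dots,d_m$ is nondegenerate modulo $\chi$; combining appropriate linear combinations of the $f_i$ with powers of the unit $\phi$, one produces, for each basis vector $e$ of $W_{[-1]}$, an element $g_e\in u(W,\chi)$ whose action on a monomial $d^{\mathbf a}D^{\mathbf b}$ is a nonzero scalar multiple of the monomial with one factor $e$ deleted, modulo $\bar K^{(|\mathbf a|+|\mathbf b|-1)}$.

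With these operators in hand the proof closes by the standard leading-term induction of \cite{z1}. Take $0\ne v\in N$ of minimal $W_{[-1]}$-degree $j$; if $j\ge1$, fix a monomial order on the degree-$j$ monomials adapted to the $g_e$, pick out the leading term $d^{\mathbf a_0}D^{\mathbf b_0}\otimes m_0$ of $v$ (with $m_0\ne0$) and a variable $e$ occurring in $d^{\mathbf a_0}D^{\mathbf b_0}$, and apply $g_e$: the leading term goes to a nonzero element of $\bar K^{(j-1)}$ while everything else lands in $\bar K^{(j-1)}$ too, so $g_e v$ is a nonzero element of $N$ of degree $<j$, contradicting minimality. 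Hence $j=0$, i.e.\ $0\ne v\in 1\otimes M$, so $N\cap(1\otimes M)\ne0$ and we are done. I expect the genuinely delicate part to be this last step together with the construction of the $g_e$: one must choose the monomial order so that the chosen operator strictly lowers the order of the leading term without cancellation, and carefully account for the signs that appear when commuting past, and permuting, the odd generators $D_i$ --- this is where the super case genuinely departs from \cite{z1}, and where both the nonsingularity and the maximal-height hypothesis on $\chi$ (the latter being what makes $\phi$ invertible) get used at full strength.
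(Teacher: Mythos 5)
Your overall strategy --- reduce to showing that every nonzero submodule of $K^{\chi}(M)_1$ meets $1\otimes M$, then manufacture elements of $u(W,\chi)$ that strictly lower the $W_{[-1]}$-monomial degree --- is indeed the shape of the argument of [z1, Th.\ 2.5] that the paper invokes, and your reduction and filtration set-up are fine. But the heart of the proof, the construction of the degree-lowering operators $g_e$, fails as proposed, because you have taken the auxiliary elements from the wrong graded piece. For any $u$ in the subalgebra of $u(W,\chi)$ generated by $W^{[1]}$ and any monomial $z$ of degree $k$ in $u(W_{[-1]})$ one has $u(z\otimes m)=\pm z\otimes um+(\text{terms in }\bar K^{(k-1)})$, since every iterated bracket of $u$ with factors of $z$ strictly lowers the number of $W_{[-1]}$-factors; hence such a $u$ maps $\bar K^{(k)}$ into $\bar K^{(k-1)}$ \emph{if and only if} $u$ annihilates $M$. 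Your $\phi$ and your $f_i$ are placed in $W_{[s]}$, the last graded piece on which $\chi$ is nonzero; there $\phi$ acts \emph{invertibly} on $M$ (that is exactly what you prove), and there is no reason for any combination of the $f_i$ with powers of $\phi$ to annihilate $M$ --- so each of your candidates $g_e$ has degree-preserving part $\pm 1\otimes (g_e|_M)\neq 0$ and does not lower the degree at all. (As written your key claim is also vacuous: ``a nonzero multiple of the monomial with one factor deleted, modulo $\bar K^{(|\mathbf a|+|\mathbf b|-1)}$'' says nothing, since that deleted monomial already lies in $\bar K^{(|\mathbf a|+|\mathbf b|-1)}$.)

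The missing idea is that the nonsingularity data of Definition 4.1 lives one degree higher: the $f_i$ are taken from $W_{[\mathrm{ht}(\chi)]}=W_{[s+1]}$, the first graded piece beyond the height of $\chi$ (compare Definition 4.6 and the proof of Theorem 4.8, where the stripping elements are likewise taken from $W_{\mathrm{ht}(\chi)}$). Since $W^{[s+1]}$ is a nilpotent ideal of $W^{[0]}$ on which $\chi$ vanishes, its elements act nilpotently on the simple $u(W^{[0]},\chi)$-module $M$, so $W^{[s+1]}M=0$ by the analogue of Lemma 3.1. Hence each $f_i$ genuinely lowers the degree, with first-order part $\sum_j\pm\,\partial_{e_j}\otimes[f_i,e_j]|_M$, where now $[f_i,e_j]\in W_{[s]}$ and $[f_i,e_j]^{[p]}\in W^{[s+1]}$ kills $M$, so $[f_i,e_j]$ acts on $M$ with $p$-th power $\chi([f_i,e_j])^p$, i.e.\ invertibly exactly when $\chi([f_i,e_j])\neq0$; nonsingularity is the joint nondegeneracy of this family. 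That is where both hypotheses actually enter, and it is also where the remaining real work sits: the $[f_i,e_j]|_M$ are invertible operators rather than scalars, so deducing injectivity of the combined symbol on each graded piece (your leading-term induction) from $\det(\chi([f_i,e_j]))\neq0$ still requires an argument, e.g.\ via the nilpotency of $[f_i,e_j]-\chi([f_i,e_j])$ on $M$. A further small error in the same direction: $\phi^{[p]}$ need not vanish in $W$ (it lies in $W_{[ps]}$, which can be nonzero); it merely annihilates $M$, so $\phi$ is invertible on $K^{\chi}(M)_1$ but is not a unit of $u(W,\chi)$.
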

Applying a similar as that for \cite[2.6]{z1}, one can show that
$\chi$ is nonsingular, if $\text{ht}(\chi)\leq p-1$ and
$\chi(W_{[s-1]}\cap W(m,1))\neq 0$.\par
\subsection{$\text{ht}(\chi)=1$}  Let $\g=\g_{\0}\oplus \g_{\1}$ be a
restricted Lie superalgebra. Let $\Phi\in \text{Aut}^{res}(\g)$ and
$M=M_{\0}\oplus M_{\1}$ be an $\g$-module. Denote by $M^{\Phi}$ the
$\g$-module having $M$ as its underlying vector space and
$\g$-action given by $x\cdot m=\Phi(x)m$($x\in \g, m \in M$), where
the action on the right is the given one. Then $M^{\Phi}$ is simple
if and only if $M$ is. If $M$ has character $\chi$, then $M^{\Phi}$
has character $\chi^{\Phi}$, where $\chi^{\Phi}(x)=\chi(\Phi(x))$,
$x\in \g$. By applying a similar argument as that for
\cite[4.2]{hz}, we have
\begin{lemma}Assume $\text{ht}(\chi)=1$. Let $\chi\in W_{\0}^*$ and let $\Phi\in\text{Aut}^*(W)$.
 If $M$ is a $u(W^0,\chi)$-module, then $[K^{\chi}(M)_1]^{\Phi}\cong
K^{\chi^{\Phi}}(M^{\Phi})_1$.
\end{lemma}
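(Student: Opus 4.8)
The strategy is to lift $\Phi$ to the enveloping algebra, check that it descends to the relevant $\chi$-reduced enveloping algebras of both $W$ and $W^{[0]}$, and then transport the canonical generating subspace $1\otimes M$ of the Kac module through it. First, since $\Phi\in\text{Aut}^*(W)$ is an even automorphism of $W$, it extends uniquely to an even algebra automorphism $\tilde\Phi$ of $U(W)$. Because $\Phi$ preserves the $p$-map, one computes $\tilde\Phi(x^{p}-x^{[p]}-\chi^\Phi(x)^{p}\cdot 1)=\Phi(x)^{p}-\Phi(x)^{[p]}-\chi(\Phi(x))^{p}\cdot 1$ for $x\in W_{\0}$, and as $x$ runs over $W_{\0}$ so does $y=\Phi(x)$; hence $\tilde\Phi$ carries the ideal $I_{\chi^\Phi}$ exactly onto $I_\chi$ and induces an even algebra isomorphism $\bar\Phi\colon u(W,\chi^\Phi)\to u(W,\chi)$. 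Since $\Phi$ also preserves the grading $W=\bigoplus_i W_{[i]}$ — in particular $\Phi(W^{[0]})=W^{[0]}$ and $\Phi(W_{[-1]})=W_{[-1]}$ — the same computation shows that $\bar\Phi$ restricts to an isomorphism $u(W^{[0]},\chi^\Phi)\to u(W^{[0]},\chi)$; moreover $\chi^\Phi(W^{[1]})=\chi(\Phi(W^{[1]}))=\chi(W^{[1]})=0$, so $\text{ht}(\chi^\Phi)=1$ as well and $K^{\chi^\Phi}(M^\Phi)_1$ is again a type I Kac module in the same setting. (One also notes here that, $\Phi|_{W^{[0]}}$ being a $p$-map-preserving automorphism of $W^{[0]}$, the twisted module $M^\Phi$ is indeed a $u(W^{[0]},\chi^\Phi)$-module.)

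Next I would define $\Psi\colon K^{\chi^\Phi}(M^\Phi)_1\to [K^\chi(M)_1]^\Phi$ by $\Psi(\bar u\otimes m)=\bar\Phi(\bar u)\otimes m$, where on the right $\bar\Phi(\bar u)\otimes m$ is an element of $K^\chi(M)_1$, the underlying super space of $[K^\chi(M)_1]^\Phi$. This is well defined and $u(W,\chi^\Phi)$-linear once one checks that, for $\bar w\in u(W^{[0]},\chi^\Phi)$, the element $\bar\Phi(\bar w)$ acts on $M$ exactly as $\bar w$ acts on $M^\Phi$: for $w\in W^{[0]}$ this reads $\Phi(w)\cdot_M m=w\cdot_{M^\Phi}m$, which is the definition of $M^\Phi$, and it extends multiplicatively; then $u(W^{[0]},\chi^\Phi)$-balancedness gives well-definedness and $u(W,\chi^\Phi)$-linearity follows from $\bar\Phi$ being an algebra map together with the definition of the $\Phi$-twisted action $\bar v\cdot_{[K^\chi(M)_1]^\Phi}(-)=\bar\Phi(\bar v)\cdot_{K^\chi(M)_1}(-)$. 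The map $\Psi$ is even because $\Phi$ (hence $\tilde\Phi$, hence $\bar\Phi$) is even and the inclusion $m\mapsto 1\otimes m$ preserves parity.

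Finally, $\Psi$ is surjective: since $\tilde\Phi$ is onto, the $u(W,\chi^\Phi)$-submodule of $[K^\chi(M)_1]^\Phi$ generated by $1\otimes M$ equals $\tilde\Phi(U(W))\cdot(1\otimes M)=U(W)\cdot(1\otimes M)=K^\chi(M)_1$. Twisting by an automorphism preserves dimension, and the PBW basis of $u(W,\chi)$ relative to the decomposition $W=W_{[-1]}\oplus W^{[0]}$ gives $\dim K^{\chi^\Phi}(M^\Phi)_1=\dim u(W_{[-1]})\cdot\dim M^\Phi=\dim u(W_{[-1]})\cdot\dim M=\dim[K^\chi(M)_1]^\Phi$; a surjection between equidimensional finite-dimensional spaces is bijective, so $\Psi$ is the desired isomorphism of $u(W)-T$-modules. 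I expect the only genuinely delicate point to be the bookkeeping of the twist direction — one must verify $\tilde\Phi(I_{\chi^\Phi})=I_\chi$ rather than $I_{\chi^{\Phi^{-1}}}$, which is exactly what forces the twisted Kac module to acquire the character $\chi^\Phi$ — and this rests on $\text{Aut}^*(W)$ consisting of automorphisms that preserve both the grading $W=\bigoplus_i W_{[i]}$ and the $p$-map, so that $\tilde\Phi$ descends compatibly to the reduced enveloping algebras of $W$ and of $W^{[0]}$.
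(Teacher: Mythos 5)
Your proof is correct and is precisely the transport-of-structure argument that the paper invokes by reference to \cite[4.2]{hz}: lift $\Phi$ to $U(W)$, check $\tilde\Phi(I_{\chi^{\Phi}})=I_{\chi}$ using that $\Phi$ preserves the $p$-map and the grading (hence $W^{[0]}$ and $W_{[-1]}$), and identify the two induced modules via $\bar u\otimes m\mapsto \bar\Phi(\bar u)\otimes m$. The only nit is your closing phrase ``isomorphism of $u(W)-T$-modules'': the lemma asserts only an isomorphism of $u(W,\chi^{\Phi})$-supermodules, and $T$-equivariance would additionally require $\Phi$ to normalize $T$, which is not assumed here.
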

Recall that $W_{[0]}\cong gl(m,n)$. Then $W_{[0],\0}\cong
gl(m)\oplus gl(n)$. Also, $W$ contains $W(m,1)$ as a subalgebra. Let
$G=GL(m)\times GL(n)$. Then $\text{Lie}(G)=W_{[0],\0}$. The adjoint
action of  $G$ on $W_{[0],\0}$ can  be extended naturally to $W$, so
we have  $G\subseteq \text{Aut}^{res}(W)$. \par  Note that
$W(m,1)\subseteq W_{\0}$. For each $x\in W(m,1)^1$, clearly we have
$exp(ad x)\in \text{Aut}^{res}(W)$. Assume $\chi\in W_{\0}^*$ with
$\text{ht}(\chi)=1$. By a similar proof as that for
\cite[Th.1.2]{ho1}, one can find $\Phi\in \text{Aut}^{res}(W)$ such
that $\chi^{\Phi}(W_{[-1]})=0$ and $\chi^{\Phi}(N^+_{[0]})=0$. In
the following, we simply assume that $\chi(W_{[-1]})=0
=\chi(N^+_{[0]})$. It follows that each simple $u(W,\chi)$-module
contains a  maximal vector.
\begin{theorem} Let $\text{ht}(\chi)=1$. If $\chi(N^-_{[0]}\cap W(m,1))\neq 0$,
or $\chi(N^{\pm}_{[0]})=0$ but $\chi(H)\neq 0$.
 then $K^{\chi}(M)_1$ is simple.
\end{theorem}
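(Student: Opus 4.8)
The plan is to use the $\mathbf{Z}$-grading $W=\bigoplus_{i\ge -1}W_{[i]}$ systematically. Since the normalisation $\chi(W_{[-1]})=0=\chi(N^+_{[0]})$ is already in force and $\text{ht}(\chi)=1$ forces $\chi(W^{[1]})=0$, every generator $x^p-x^{[p]}-\chi(x)^p1$ of the defining ideal of $u(W,\chi)$ is homogeneous for this grading (degree $0$ when $x\in W_{[0]}$; and $\chi(x)=0$ when $x\in W_{[i]}$ with $i\ne 0$, the other two summands being homogeneous). Hence $u(W,\chi)$ is $\mathbf{Z}$-graded and $K^\chi(M)_1=u(W_{[-1]})\otimes M$ is a finite-dimensional graded $u(W,\chi)$-module concentrated in degrees $0,-1,\dots,-(s+1)$, its $(-k)$-th component being $u(W_{[-1]})_{-k}\otimes M$, with top component $1\otimes M$ and bottom component $Y\otimes M$. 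As $u(W,\chi)$ is graded, the associated graded of any submodule (for the induced filtration) is again a submodule, of the same dimension, and graded; so it suffices to show $K^\chi(M)_1$ has no proper nonzero graded submodule. Now a nonzero graded submodule meeting degree $0$ is everything, because $1\otimes M\cong M$ is a simple $u(W_{[0]},\chi)$-module that generates $K^\chi(M)_1$; and if a graded submodule $V$ has highest nonzero component in a negative degree $-k_0$, then $V_{-k_0}\subseteq u(W_{[-1]})_{-k_0}\otimes M$ is annihilated by all of $W^{[1]}$ (each $W_{[j]}$, $j\ge 1$, would send it to a strictly higher degree, where $V$ vanishes). Thus everything reduces to the assertion $(\ast)$: for $1\le k\le s+1$, the only $w\in u(W_{[-1]})_{-k}\otimes M$ with $W^{[1]}w=0$ is $w=0$. (Conversely, if $(\ast)$ failed one could extract an $N^+_{[0]}$-highest weight vector $v'$ in a negative degree killed by the nilradical of $B_{max}$, and $u(W,\chi)v'$ would be a proper submodule; so $(\ast)$ is in fact equivalent to simplicity of $K^\chi(M)_1$.)

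To prove $(\ast)$ I would argue by contradiction: let $0\ne w\in u(W_{[-1]})_{-k}\otimes M$ with $k\ge 1$ and $W^{[1]}w=0$; since $\chi(N^+_{[0]})=0$, $N^+_{[0]}$ acts nilpotently and $w$ may be taken $N^+_{[0]}$-highest weight. Every monomial $D^{\underline b}d^{\underline a}\otimes m$ occurring in $w$ has $(\underline a,\underline b)\ne(0,0)$, so there is a factor $d_i$ or $D_t$ to remove; I would apply a root vector $z$ of $W_{[1]}$ (one of $x^{2\e_i}d_i$, $x^{\e_i+\e_j}d_j$, $\xi_ix_iD_i$, and so on), use $z(D^{\underline b}d^{\underline a}\otimes m)=[z,D^{\underline b}d^{\underline a}]\otimes m$ (valid since $z\in W^{[1]}$ kills $M$) together with bracket identities such as $[x^{2\e_i}d_i,d_i]=-x_id_i$, $[x^{\e_i+\e_j}d_j,d_i]=-x_jd_j$, $[\xi_ix_iD_i,d_i]=-\xi_iD_i$, and compute $zw$. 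A short calculation, in the style of Proposition 3.6, shows that whether $zw$ vanishes is governed by the action on $M$ of an element of $\mathfrak{h}$ (respectively of $N^-_{[0]}\cap W(m,1)$), and the hypothesis makes that action nonvanishing: if $\chi(N^-_{[0]}\cap W(m,1))\ne 0$, fix $i<j$ with $\chi(x_jd_i)\ne 0$ and route the bracket through $x_jd_i$, which acts invertibly on $M$ because on $M$ one has $(x_jd_i)^p=\chi(x_jd_i)^p\ne 0$; if instead $\chi(N^{\pm}_{[0]})=0$ but $\chi(H)\ne 0$, fix $h_0\in\{\xi_iD_i,x_jd_j\}$ with $\chi(h_0)\ne 0$ and route through $h_0$, which on $M$ satisfies $h_0^p-h_0=\chi(h_0)^p\ne 0$, so $h_0$ has no eigenvalue in $\mathbf{F}_p$ and the scalars $h_0+c$ ($c\in\mathbf{F}_p$) occurring in $zw$ act invertibly. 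Either way $zw\ne 0$, contradicting $W^{[1]}w=0$; hence $(\ast)$, and $K^\chi(M)_1$ is simple. (For $n=0$ this recovers the $W(m,1)$ statement of \cite{z1,ho1}.)

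The step that genuinely requires work — carried out along the lines of \cite{z1,ho1,hz} — is this last computation. One must evaluate the action of the chosen root vector on an \emph{arbitrary} $w$, not just on a single monomial: collect all terms of the relevant degree that contribute (here the choice of $w$ as $N^+_{[0]}$-highest weight, together with $\chi(N^+_{[0]})=0$, helps kill unwanted cross terms), iterate the procedure up through several degrees if necessary (and replace $z\in W_{[1]}$ by a root vector of some $W_{[j]}$, $j>1$, on the rare occasions when the $W_{[1]}$-bracket degenerates), and check in each case that the resulting product of binomial coefficients with the $\chi$- or eigenvalue-factors above does not vanish. The crux is that each individual $d_i$ and each $D_t$ must be clearable using only the single direction on which $\chi$ is guaranteed nonzero, which is exactly why the hypothesis is stated as a disjunction of the two cases; making this bookkeeping work uniformly across all monomials is the main obstacle.
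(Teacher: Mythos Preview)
Your approach is sound and, in the first case $\chi(N^-_{[0]}\cap W(m,1))\neq 0$, coincides with the paper's: the paper picks a maximal vector $v=\sum D_I d_1^{a_1}\cdots d_m^{a_m}\otimes m_{I,a}$, strips the $d$-part by the argument of \cite[Th.~2.4]{ho1} (using some $x_id_j$ with $i>j$ and $\chi(x_id_j)\neq 0$), and then applies $x_i\xi_k d_j\in W_{[1]}$ to strip the remaining $D_I$-part, forcing $v\in 1\otimes M$. Your $\mathbf Z$-graded reduction to the assertion $(\ast)$ is a clean repackaging of exactly this maximal-vector analysis; once you take $w$ to be an $H$-weight vector annihilated by $N^+_{[0]}$, you are looking at the same object.

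The genuine divergence is in the second case $\chi(N^\pm_{[0]})=0$, $\chi(H)\neq 0$. Here the paper does \emph{not} compute directly with $W_{[1]}$ at all. It simply observes that $\chi(h_0)\neq 0$ forces the corresponding coordinate $\lambda_i$ of $\lambda$ out of $\mathbf F_p$ (via $\lambda_i^p-\lambda_i=\chi(h_0)^p$), so $\lambda\notin\Lambda^a$ is typical; then the proof of Theorem~3.13 applies verbatim, since the hypotheses $\lambda\neq 0,(p-1)\epsilon_i$ and $\lambda(\mathfrak h_{\eta_i})\neq 0$ in Proposition~3.6 are now trivially met, and the chain of reflections gives $L^{B_{max}}(\lambda)\cong L^{B_{min}}(\lambda')$ with $\lambda'=\lambda-\sum\eta_i-\sum(p-1)\epsilon_j$, forcing the unique simple submodule and quotient of $K^\chi(M)_1$ to coincide. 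This sidesteps precisely the monomial bookkeeping you flag as the main obstacle. Your direct route would also work, but it amounts to reproving the relevant pieces of Proposition~3.6 inside the induced module rather than quoting the reflection machinery already in hand.
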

\begin{proof}  Assume  $\chi(N^-_{[0]}\cap W(m,1))\neq
0$. Then there is $x_id_j$, $i>j$ such that $\chi(x_id_j)\neq 0$.
Let $$v=\sum D_Id^{a_1}_1\dots d^{a_m}_m\otimes m_{I,a}\in
K^{\chi}(M)_1$$ be a maximal vector, where each  $m_{I,a}\in M$ is
homogeneous. Applying a similar argument as that for \cite[Th.
2.4]{ho1}, one gets $v=\sum_{I} D_I\otimes m_{I,0}$. Then by
applying $x_i\xi_kd_j\in W_{[1]}$ to $v$, we get $v=1\otimes
m_{0,0}\in M$, so that $K^{\chi}(M)_1$ contains a unique maximal
vector $1\otimes m_{0,0}$ which also generates $K^{\chi}(M)_1$. It
follows that $K^{\chi}(M)_1$ is simple.\par Secondly we consider the
case $\chi(N^{\pm}_{[0]})=0$ and $\chi(H)\neq 0$. If
$\chi(x_id_i)\neq 0$ for some $1\leq i\leq m$. Then since
$\l^p_i-\l=\chi(x_id_i)^p$, $\l\notin \mathbf F_p$, so that $\l$ is
atypical. Then the proof of Theorem 3.13 implies that
$K^{\chi}(M)_1$ is simple. Similarly one gets that $K^{\chi}(M)_1$
is simple if $\chi(\xi_iD_i)\neq 0$ for some $1\leq i\leq n$.
\end{proof}
\subsection{Induced modules of type II }In this section we study the
induced nonrestricted $W$-modules with respect to a different
gradation.
\par Recall the $\mathbf Z$-grading of $W$ given in Section 2:
$W=W_{-1}+W_0+\dots + W_{n-1}$.
 $W_0=\sigma \oplus W(m,1)$, where
$$\sigma=\langle x^a\xi_iD_j|0\leq a\leq \tau, 1\leq i,j\leq
n\rangle$$ is an ideal of $W_0$. From Section 2, we have
$W^{[p]}_{2k}=0$ for all $k>0$; while in $W_0$, the p-map on
$W(m,1)$ coincides with that of the restricted  Lie algebra
$W(m,1)$. For $x^a\xi_iD_j\in\sigma$, we have
$$(x^a\xi_iD_j)^{[p]}=\begin{cases}
\xi_iD_i, &\text{if  $a=0$ and $i=j$}\\
0,&\text{otherwise.}\end{cases} $$ For each $\chi\in W^*_{\0}$,
define the height $$\text{ht}(\chi)=min\{i| \chi(W^i)=0\},$$ where
$W^i=\sum_{j\geq i}W_j$. We see that all the heights for a character
$\chi$ are odd.  It  is  easy to check that $[W_{-1},
W_{2s+1}]=W_{2s}$.  Let  $M$ be a simple $u(W^0,\chi)$-module. We
define the nonrestricted Kac module of type II
$$K^{\chi}(M)_2=: u(W,\chi)\otimes _{u(W^0,\chi)}M.$$ \begin{definition}Assume $\text{ht}(\chi)=2s+1$.  Let
$f_1,f_2,\dots,f_s$ be a basis of $W_{-1}$. If there are elements
$e_1,e_2,\dots, e_s\in W_{2s+1}$ such that
$\text{det}\chi((e_1,e_2,\dots,e_s)^T(f_1,f_2,\dots,f_s))\neq 0$,
 we say that  $\chi$ is nonsingular.
\end{definition} Applying a similar argument as that for \cite[2.5]{z1},
we get
\begin{proposition}Let $\text{ht}(\chi)=2s+1>1$.  If $\chi$ is
nonsingular, then  $K^{\chi}(M)_2$ is simple.
\end{proposition}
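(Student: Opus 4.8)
The plan is to transcribe to the $\mathbf Z$-grading $W=W_{-1}+W_0+\dots+W_{n-1}$ the argument behind Theorem 4.2 (that is, the argument for \cite[Theorem 2.5]{z1}): I will show that every nonzero $u(W,\chi)$-submodule $N'$ of $K^{\chi}(M)_2$ equals $K^{\chi}(M)_2$.

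First I set up a filtration by $W_{-1}$-degree. By the PBW basis for reduced enveloping algebras, $K^{\chi}(M)_2\cong u(W_{-1},\chi)\otimes_{\mathbf F}M$ as a vector space, and since $W_{-1}\subseteq W_{\1}$ is abelian (indeed $[W_{-1},W_{-1}]\subseteq W_{-2}=0$) the algebra $u(W_{-1},\chi)$ is the exterior algebra on the basis $f_1,f_2,\dots$ of $W_{-1}$ fixed in Definition 4.5; I fix its monomial basis $f^{I}=f_{i_1}\cdots f_{i_k}$ indexed by subsets $I=\{i_1<\dots<i_k\}$, and for $x=\sum_I f^{I}\otimes m_I$ I write $\deg x=\max\{|I|\mid m_I\neq 0\}$. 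Since $u(W_{-1},\chi)$ generates $K^{\chi}(M)_2$ over $1\otimes M$, and $1\otimes M$ is a simple $u(W^0,\chi)$-submodule isomorphic to $M$, it suffices to prove $N'\cap(1\otimes M)\neq 0$: then $1\otimes M\subseteq N'$ by simplicity of $1\otimes M$, whence $N'\supseteq u(W_{-1},\chi)(1\otimes M)=K^{\chi}(M)_2$.

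To produce a nonzero element of $N'\cap(1\otimes M)$ I would take $0\neq v\in N'$ of minimal $W_{-1}$-degree $d$ (we may assume $v$ is $\mathbf Z_2$-homogeneous) and assume $d\ge 1$ for contradiction; write $v=\sum_{|I|\le d}f^{I}\otimes m_I$ with $m_{I_0}\neq 0$ for some $|I_0|=d$. Here the hypothesis enters through the elements $e_1,e_2,\dots\in W_{2s+1}$ of Definition 4.5. Since $[e_j,f_i]\in W_{2s+1-1}=W_{2s}$, and $\text{ht}(\chi)=2s+1$ forces $\chi(W^{2s+1})=0$ while $[W_{-1},W_{2s+1}]=W_{2s}$ has $\chi(W_{2s})\neq 0$, rewriting each $e_j\cdot v$ by PBW (pushing $e_j$ to the right past the $f^{I}$'s) produces, besides terms of strictly smaller $W_{-1}$-degree, a contribution of $W_{-1}$-degree $d-1$ whose coefficients arise from applying the matrix $\big(\chi([e_j,f_i])\big)$ to the top coefficients $m_I$, $|I|=d$. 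Because $\det\big(\chi([e_i,f_j])\big)\neq 0$, a suitable $\mathbf F$-linear combination $\sum_j c_j\,(e_j\cdot v)\in N'$ then has $W_{-1}$-degree $\le d-1$ and is nonzero, contradicting the minimality of $d$. Hence $d=0$, i.e.\ $0\neq v\in 1\otimes M$, which is what we needed.

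The real obstacle is the bracket bookkeeping compressed into the previous paragraph. When $e_j\in W_{2s+1}$ is pushed past up to $d$ factors from $W_{-1}$, the iterated brackets land in $W_{2s},W_{2s-1},\dots$, and once carried all the way into $M$ they act there through the full $u(W^0,\chi)$-module structure, not through $\chi$. One must verify (i) that every term obtained from two or more brackets has $W_{-1}$-degree at most $d-2$, so that at $W_{-1}$-degree $d-1$ only single-bracket terms survive, and (ii) that those single-bracket terms genuinely contribute the scalars $\chi([e_j,f_i])$ rather than merely the operators $[e_j,f_i]$ — which in practice is arranged by first replacing $v$ by a minimal-degree element of $N'$ whose top coefficients are annihilated by $W_{2s+1}$ (possible because $M$ is finite-dimensional and $W_{2s+1}\subseteq W^0$ acts on it). This is exactly the step carried out for $W(m,1)$ in \cite{z1}; once it is reproduced in the present grading, the remainder of the proof is formal, and the Proposition follows.
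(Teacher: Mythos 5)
Your overall strategy (filter $K^{\chi}(M)_2$ by $W_{-1}$-degree, take a nonzero element of a submodule of minimal degree $d$, and use the $e_j\in W_{2s+1}$ to lower the degree) is sound and matches the intended argument; note the paper itself gives no details beyond citing \cite{z1}. Your point (i) is fine: a term with $t\geq 2$ iterated brackets consumes $t$ factors from $W_{-1}$ and returns at most one (when the length-$(2s+2)$ bracket lands back in $W_{-1}$), so it has degree at most $d-2$. Also, the terms $f^{I}\otimes e_jm_I$ vanish outright, with no need to modify $v$: $W^{2s+1}$ is an ideal of $W^{0}$ all of whose elements act nilpotently on $M$ (even elements satisfy $x^{[p]}=0$ and $\chi(x)=0$, odd elements square into $W^{4s+2}$), so $W^{2s+1}M=0$ by Lemma 3.1.

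The genuine gap is your resolution of (ii). Asking that the top coefficients be annihilated by $W_{2s+1}$ is both automatic (by the remark above) and beside the point: the degree-$(d-1)$ component of $e_j\cdot v$ is built from the \emph{operators} $[e_j,f_i]|_M$ with $[e_j,f_i]\in W_{2s}$, and killing $W_{2s+1}$ does not make $W_{2s}$ act by scalars, so the hypothesis $\det\bigl(\chi([e_j,f_i])\bigr)\neq 0$ cannot be invoked as you state it. The step that actually closes this is: for $y\in W_{2s}$ one has $y^{[p]}=0$ (the paper records $W_{2k}^{[p]}=0$ for $k>0$), hence $(y-\chi(y))^p=y^p-\chi(y)^p=0$ on $M$, so $y|_M=\chi(y)\cdot\mathrm{id}+(\text{nilpotent})$; moreover all these operators commute, because $[W_{2s},W_{2s}]\subseteq W_{4s}\subseteq W^{2s+1}$ annihilates $M$. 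Therefore the matrix $\bigl([e_j,f_i]|_M\bigr)$ is a matrix over a commutative local ring whose reduction modulo the nilradical is the invertible scalar matrix $\bigl(\chi([e_j,f_i])\bigr)$, hence is itself invertible; in particular the map $(m_i)_{i\notin J}\mapsto\bigl(\sum_{i\notin J}\pm[e_j,f_i]m_i\bigr)_j$ is injective, and applying it to the nonzero tuple of top coefficients $(m_{J\cup\{i\}})_{i\notin J}$ yields some $j$ with $e_j\cdot v$ nonzero of degree at most $d-1$, contradicting minimality. With this substitution (and dropping the $W_{2s+1}$-annihilation device) your argument goes through; note also that a fixed $\mathbf F$-linear combination $\sum_jc_je_jv$ is then unnecessary, since injectivity already gives a single suitable $j$.
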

\subsection{$ht(\chi)=1$}
 By definition,
$$W_1=\sum^n_{i=1}\xi_iW(m,1)\oplus \mathfrak A(m,1)W(n)_1.$$  Note that $[W_{-1}, \mathfrak
A(m,1)W(n)_1]\subseteq \sigma$.
\begin{definition} Assume $\chi\in W^*_{\0}$ and $ht(\chi)=1$.
 Let $f_1,\dots, f_s$ be a ordered basis of $W_{-1}$. We say that $\chi$
is inducible if there are elements $e_1,\dots,e_s\in W_1$
satisfying:
\par (1) $\chi([e_i,f_i])\neq 0$, $i=1,\dots,s$.\par (2)  $[e_i, f_j]\in
\text{ann}
(f_{j+1}\dots f_s\otimes M)$ if $i\neq j$.\par (3)
$[[e_i,f_j],f_{j+k}]\in \text{ann} (f_{j+1}\dots f_s\otimes M)$ for
all $i,j$ and $k>0$.
\end{definition}
It is easy to see that $\chi^{\Phi}$ is inducible  for each $\Phi\in
\text{Aut}^*(W)$ if $\chi$ is inducible. \par Examples: (1) Let
$x^bD_i\in W_{-1}$ and $\xi_jx^ad_l\in W_1$. We have
$$[x^bD_i,
\xi_jx^ad_l]=\delta_{ij}\binom{a+b}{a}x^{a+b}d_l+\binom{a+b-\e_l}{b-\e_l}\xi_jx^{a+b-\e_l}D_i.$$
Let $\chi(\sigma)=0$ and $\chi(x^ad_i)=0$ for all $a\leq\tau$ and
$1\leq i\leq n$.  Assume $\chi(x^{\tau}d_l)\neq 0$ for some $1\leq
l\leq m$.  Take the ordered basis of $W_{-1}$:
$$D_1,\dots, D_n,\dots x^aD_1,\dots,x^aD_n,\dots,x^{\tau}D_1,\dots,
x^{\tau}D_n,$$ where $a's$ are put in the increasing order of $|a|$.
For $f_i=x^aD_i$, we let $e_i=\xi_ix^{\tau-a}d_l$. Then we see that
$\chi$ satisfies Definition 4.7.\par

(2) Let $\chi\in W^*_{\0}$ with
$\chi(x^{\tau}\xi_1D_1)\chi(x^{\tau}\xi_2D_2)\neq 0$ and
$\chi(x^{\tau}\xi_iD_j)=0$ if $i\neq j$. Take the same ordered basis
of $W_{-1}$ as above.  For $f_i=x^aD_i\in W_{-1}$,
 we
let $$e_i=\begin{cases} x^{\tau-a}\xi_i\xi_1D_1, &\text{if $i\neq 1$}\\
x^{\tau-a}\xi_i\xi_2D_2, &\text{if $i=1$.}\end{cases}$$ Then we see
that $\chi$ is inducible.

\begin{theorem} If $\chi\in W^*_{\0}$ be inducible, then  $K^{\chi}(M)_2$ is simple.
\end{theorem}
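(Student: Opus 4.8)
The plan is to exhibit $1\otimes M$ as a simple $u(W^0,\chi)$-submodule of $K:=K^{\chi}(M)_2$ which generates $K$ over $u(W,\chi)$ and is met nontrivially by every nonzero submodule; simplicity is then immediate. First a reduction. Inducibility forces $\mathrm{ht}(\chi)=1$, i.e. $\chi(W^1)=0$ with $W^1=\sum_{j\ge 1}W_j$. Now $W^1$ is a $\mathbf Z_2$-graded ideal of $W^0$ that acts nilpotently on every $u(W^0,\chi)$-module: for a homogeneous even $x\in W_{2k}$, $k\ge 1$, the defining relations give $x^p=x^{[p]}\in W_{2kp}$ in $u(W,\chi)$, hence $x^{p^t}=x^{[p]^t}\in W_{2kp^t}=0$ for $t\gg 0$, and a homogeneous odd element is handled through $x^2=\tfrac12[x,x]\in W^{\ge 2}\cap W_{\overline{0}}$. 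By Lemma~3.2 we get $W^1M=0$, so $M$ is a simple $u(W_0,\chi)$-module and, by PBW, $K\cong\Lambda(W_{-1})\otimes_{\mathbf F}M$ as a vector space, $W_{-1}$ being purely odd and abelian. Consequently $1\otimes M$ is killed by $W^1$ and preserved by $W_0$, hence a $u(W^0,\chi)$-submodule isomorphic to $M$, and $u(W,\chi)(1\otimes M)=u(W_{-1})(1\otimes M)=K$ again by PBW.

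If $N\subseteq K$ is a nonzero submodule with $N\cap(1\otimes M)\ne 0$, then $N\cap(1\otimes M)$ is a nonzero $u(W_0,\chi)$-submodule of the simple module $1\otimes M$, so $N\supseteq 1\otimes M$ and therefore $N=K$. Thus everything reduces to showing: \emph{every nonzero submodule $N$ of $K$ contains a nonzero element of $1\otimes M$}.

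To prove this I would take $0\ne v\in N$, expand $v=\sum_{I\subseteq\{1,\dots,s\}}f_I\otimes m_I$ in PBW normal form (using the ordered basis $f_1,\dots,f_s$ of $W_{-1}$ from the definition of inducibility), pick $I_0=\{i_1<\cdots<i_r\}$ of maximal cardinality $r$ with $m_{I_0}\ne 0$, and apply the operator $e_{i_1}e_{i_2}\cdots e_{i_r}$ (the $e_i\in W_1$ from the definition, in increasing order of index) to $v$. Since each $e_i$ raises the $\mathbf Z$-degree by $1$ and $K$ vanishes in positive degrees, all contributions of monomials $f_I$ with $|I|<r$ are annihilated; and since $e_iM=0$, each step $e_i\cdot(f_I\otimes m_I)=[e_i,f_I]\otimes m_I$ is a super-Leibniz sum in which a position $j$ produces the factor $[e_i,f_j]\in W_0$. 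For $j\ne i$, conditions~(2) and~(3) (the latter controlling the secondary commutators $[[e_i,f_j],f_{j+k}]\in W_{-1}$ produced while normal-ordering) make every such off-diagonal contribution vanish — the increasing order of application being chosen precisely so that the right-hand tail at each step is exactly $f_{j+1}\cdots f_s$ — and the same mechanism kills all contributions of the monomials $f_I$ with $I\ne I_0$, $|I|=r$. For $j=i$, condition~(1) gives $\chi([e_i,f_i])\ne 0$; by the $p$-character relation (using that $[e_i,f_i]$ is $[p]$-nilpotent, which one verifies as in the examples following the definition) $[e_i,f_i]$ acts on $M$ as $\chi([e_i,f_i])\,\mathrm{id}$ plus a nilpotent operator, and it is this scalar part that survives. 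After the dust settles, $e_{i_1}\cdots e_{i_r}v$ lies in $1\otimes M$ and equals $\pm\bigl(\prod_{a=1}^{r}\chi([e_{i_a},f_{i_a}])\bigr)(1\otimes m_{I_0})$ modulo terms in which one or more of those scalars is replaced by the nilpotent part of the corresponding $[e_{i_a},f_{i_a}]$; a standard induction on the total nilpotency (exactly as in \cite[Th.~2.5]{z1} and \cite[Th.~2.4]{ho1}) isolates the scalar term, producing the required nonzero element of $N\cap(1\otimes M)$. Hence $K^{\chi}(M)_2$ is simple.

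The main obstacle is the bookkeeping in the last paragraph: controlling the iterated super-commutators of the $e_i$ with the $f_j$, matching the tails $f_{j+1}\cdots f_s$ to hypotheses~(2)–(3) under the chosen order of application, and separating the scalar $\chi([e_i,f_i])$ from the nilpotent part of the $W_0$-action on $M$ so that the diagonal term is genuinely nonzero. This is the step that mimics \cite[2.5]{z1} and \cite[2.4]{ho1}; the reduction $W^1M=0$ and the generation statement $u(W,\chi)(1\otimes M)=K$ are the routine scaffolding.
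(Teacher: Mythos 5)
Your overall strategy is the paper's: reduce to showing that every nonzero submodule of $K^{\chi}(M)_2$ meets $1\otimes M$, then use the elements $e_1,\dots,e_s$ of Definition~4.7 to strip off the odd factors; the scaffolding ($\mathrm{ht}(\chi)=1$ forces $W^1M=0$ via Lemma~3.2, $1\otimes M$ generates, $W_{-1}$ is purely odd and abelian) is correct. But there is a genuine gap in the main combinatorial step. You take $0\neq v=\sum_I f_I\otimes m_I$, pick $I_0=\{i_1<\dots<i_r\}$ of \emph{maximal} cardinality with $m_{I_0}\neq 0$, and apply $e_{i_1}\cdots e_{i_r}$ directly. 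Conditions (2) and (3) of Definition~4.7, however, are statements about the annihilator of the \emph{full} tails $f_{j+1}\cdots f_s\otimes M$, and annihilating $f_{j+1}\cdots f_s\otimes M$ does not imply annihilating a proper subproduct tensored with $M$: for instance $y=\sum_l c_lf_l\in W_{-1}$ kills $f_{j+1}\cdots f_s\otimes M$ exactly when $c_1=\dots=c_j=0$, yet such a $y$ acts nontrivially on $f_{j+2}\cdots f_s\otimes M$ whenever $c_{j+1}\neq 0$. So when $I_0\subsetneq\{1,\dots,s\}$ the off-diagonal contributions $[e_{i_a},f_{i_b}]$ with $a\neq b$ land on subtails $f_{i_{b+1}}\cdots f_{i_r}\otimes m_{I_0}$ about which hypotheses (2)--(3) say nothing, and the same problem defeats your claim that the competing monomials of cardinality $r$ are killed. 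Your own parenthetical — that the increasing order of application makes ``the right-hand tail at each step exactly $f_{j+1}\cdots f_s$'' — is true only for the full monomial $f_1\cdots f_s$.

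The paper inserts exactly the normalization you are missing: before touching the $e_i$'s, multiply $v$ by the product of the $f_j$ with $j\notin I_{\min}$, where $I_{\min}$ has \emph{minimal} cardinality among the subsets with $m_I\neq 0$. Since the $f_j$ are odd, anticommute, and square to zero in $u(W,\chi)$, every monomial except $f_{I_{\min}}\otimes m_{I_{\min}}$ dies and the survivor becomes $\pm f_1\cdots f_s\otimes m_{I_{\min}}\neq 0$. Only then are $e_1,\dots,e_s$ applied, in increasing order, and now every tail encountered is literally of the form $f_{j+1}\cdots f_s\otimes M$, so (2) and (3) apply verbatim while (1), combined with the $p$-character relation as you describe, makes each diagonal operator $[e_i,f_i]$ act as a nonzero scalar plus a nilpotent. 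With that one preliminary step inserted, the rest of your argument goes through.
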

\begin{proof}Let $M'=M'_{\0}\oplus M'_{\1}\subseteq K^{\chi}(M)_2$ be a simple
$u(\g^0)$-submodule. Let $$ m=\sum ^s_{i=1}F_i\otimes m_i\in M'$$ be
a homogeneous nonzero element, where each $F_i$ is a product of
certain $f_i's$ and each $m_i\in M$ is homogeneous. By applying
elements $f_i\in W_{-1}$, we get $0\neq m'=\Pi^s_{i=1} f_i\otimes
m_0\in M'$ for some homogeneous $m_0\in M$. Taking elements
$e_1,\dots,e_s\in W_1$ as in Definition 4.8,  and   applying them to
$m'$,  one gets $1\otimes m_0\in M'$, hence $M\subseteq M'$. This
implies that $M'=K^{\chi}(M)_2$, so that $K^{\chi}(M)_2$ is simple.
\end{proof}

\def\refname{\centerline{\bf REFERENCES}}


\begin{thebibliography}{10}
\bibitem {bkn}  I. Bagci, J. Kujawa, and D. Nakano, \textit {\rm Cohomology and support
varieties for Lie superalgebras of type $W(n)$},
arXiv:0806.3740v1[math. RT] 23 Jun 2008.
\bibitem {cs}  S. Chiu,  \textit {\rm Principal indecomposable representations
 for restricted Lie algebras of Cartan type}, \textit{J. Algebra}
 \textbf{155} (1993): 142-160.
 \bibitem {J1}  J. C. Jantzen,  \textit {\rm Uber Darstellungen Hoherer Frobinenius-kerne
 halbeinfacher algebraischer gruppen}, \textit{Math. Z.} \textbf{164} (1979): 271-292.


\bibitem {J3} J. C. Jantzen, \textit{\rm Representations of Lie algebras in prime characteristic}, \textit
              in ; A. Broer (Ed. ), Representation Theories and Algebraic Geometry, Proceedings,
              Montreal, 1997, in: NATO ASI Series, Vol. C514, Kluwer, Dordrecht, 1998, pp.
              185-235.
\bibitem{k1} V. G. Kac, \textit{\rm Lie superalgebras.} \textit{Adv. Math.}
 \textbf{26} (1977): 8--96.

\bibitem{ho} R. R. Holmes, \textit{\rm Simple restricted modules for the restricted hamiltonian
algebra}, \textit{J. Algebra} \textbf{199} (1998): 229-261.

\bibitem{ho1} R. R. Holmes, \textit{\rm Simple  modules  with
character height at most one for the restricted witt algebras},
 \textit{J. Algebra} \textbf{237} (2001):
446--469.
\bibitem{hz} R. R. Holmes, Chaowen Zhang\textit{\rm Some simple  modules for the restricted
Cartan-type Lie algebras},
 \textit{J. Pure and Applied Algebra} \textbf{173} (2002):
135-165.
\bibitem{vs} Vero Serganova,
\textit {\rm On representations of Cartan type Lie superalgebras},
\textit{ Amer. Math. Soc. Transl.} (2) Vol. 1213, 2005.


\bibitem{lz} Liu Wende and Zhang Yongzheng,
\textit{Modular Lie superalgebras} (in Chinese).

\bibitem{p}   V. M. Petrogradski,  \textit{\rm Identities in the enveloping algebras for
modular Lie superalgebras.} \textit{J. Algebra} \textbf{145} (1992):
1--21.
\bibitem{[Ru]}  A. N. Rudakov, \textit{\rm On representations of classical
semisimple Lie algebras of characteristic p.} \textit{Izv. Akad.
Nauk SSSR} Ser. Mat.Tom \textbf{34} No.4 (1970).
 \bibitem{sc} M. Scheunert, \textit{\rm Theory of Lie superalgebras}. Lecture
Notes in Math. Vol. 716. Springer-verlag, 1979.

\bibitem{sh} G. Shen, \textit{\rm Graded modules of graded Lie algebras of cartan type (3)},
 \textit {Chinese Ann. Math.} Ser. B \textbf{9} (1988): 404-417.

\bibitem{st} H. Strade,
 \textit{\rm Simple Lie Algebras over Fields of Positive Characteristic,
I. Structure theory.} Walter de Gruyter, Berlin and New York, 2004.


\bibitem{sf} H. Strade and R. Farnsteiner, \textit{\rm Modular Lie Algebras and Their
Representations}. Monogr. Texbooks Pure Appl. Math. Vol. 116.
Dekker, Inc., 1988.
\bibitem{w} R. L. Wilson, \textit{\rm Homomorphisms of graded Lie
algebras of Cartan type} \textit{Comm. in Algebra,} 3(7) (1975):
591-613 .
\bibitem{wz} Wei Zhu and Zhang Yongzhen, \textit{\rm
Representations of Lie superalgebras of Cartan type}, \textit talk
in "the 10th Chinese national conference on Lie algebras and related
topics", Changshu 2007.
\bibitem{z} Chaowen Zhang, \textit{\rm On the simple modules for the restricted Lie superalgebra $sl(n|1)$}
\textit{J.  Pure and applied algebra}, \textbf{213}, No. 5, (2009)
756-765.
\bibitem{z1} Chaowen Zhang, \textit{\rm Representations of  the restricted Lie algebras of Cartan type},
\textit{J. Algebra}  \textbf{290}  (2005): 408-432.

\end{thebibliography}
\end{document}